\newif\ifpersonal
\numberwithin{equation}{section}
\theoremstyle{plain}
\newtheorem{thm}[equation]{Theorem}
\newtheorem{lem}[equation]{Lemma}
\newtheorem{prop}[equation]{Proposition}
\newtheorem{cor}[equation]{Corollary}
\theoremstyle{definition}
\newtheorem{defin}[equation]{Definition}
\newtheorem{notation}[equation]{Notation}
\newtheorem{eg}[equation]{Example}
\newtheorem{rem}[equation]{Remark}
\newtheorem{recollection}[equation]{Recollection}
\DeclareMathOperator{\Mon}{Mon}
\renewcommand{\Bar}{\operatorname{Bar}}
\DeclareMathOperator{\CoBar}{CoBar}
\DeclareMathOperator{\gp}{gp}
\newcommand{\personal}[1]{\textcolor[rgb]{0,0,1}{(Personal: #1)}}
\newcommand{\discussion}[1]{\textcolor{violet}{(Discussion: #1)}}
\newcommand{\personal}[1]{\ignorespaces}
\newcommand{\discussion}[1]{\ignorespaces}
\newcommand{\Z}{\mathbb Z}
\newcommand{\cC}{\mathcal C}
\newcommand{\cD}{\mathcal D}
\newcommand{\cE}{\mathcal E}
\newcommand{\cX}{\mathcal X}
\newcommand{\cY}{\mathcal Y}
\DeclareFontFamily{U}{BOONDOX-calo}{\skewchar\font=45 }
\DeclareFontShape{U}{BOONDOX-calo}{m}{n}{<-> s*[1.05] BOONDOX-r-calo}{}
\DeclareFontShape{U}{BOONDOX-calo}{b}{n}{<-> s*[1.05] BOONDOX-b-calo}{}
\DeclareMathAlphabet{\mathcalboondox}{U}{BOONDOX-calo}{m}{n}
\let\save@mathaccent\mathaccent
\newcommand*\if@single[3]{%
	\setbox0\hbox{${\mathaccent"0362{#1}}^H$}%
	\setbox2\hbox{${\mathaccent"0362{\kern0pt#1}}^H$}%
	\ifdim\ht0=\ht2 #3\else #2\fi
}
\newcommand*\rel@kern[1]{\kern#1\dimexpr\macc@kerna}
\newcommand*\widebar[1]{\@ifnextchar^{{\wide@bar{#1}{0}}}{\wide@bar{#1}{1}}}
\newcommand*\wide@bar[2]{\if@single{#1}{\wide@bar@{#1}{#2}{1}}{\wide@bar@{#1}{#2}{2}}}
\newcommand*\wide@bar@[3]{%
	\begingroup
	\def\mathaccent##1##2{%
		\let\mathaccent\save@mathaccent
		\if#32 \let\macc@nucleus\first@char \fi
		\setbox\z@\hbox{$\macc@style{\macc@nucleus}_{}$}%
		\setbox\tw@\hbox{$\macc@style{\macc@nucleus}{}_{}$}%
		\dimen@\wd\tw@
		\advance\dimen@-\wd\z@
		\divide\dimen@ 3
		\@tempdima\wd\tw@
		\advance\@tempdima-\scriptspace
		\divide\@tempdima 10
		\advance\dimen@-\@tempdima
		\ifdim\dimen@>\z@ \dimen@0pt\fi
		\rel@kern{0.6}\kern-\dimen@
		\if#31
		\overline{\rel@kern{-0.6}\kern\dimen@\macc@nucleus\rel@kern{0.4}\kern\dimen@}%
		\advance\dimen@0.4\dimexpr\macc@kerna
		\let\final@kern#2%
		\ifdim\dimen@<\z@ \let\final@kern1\fi
		\if\final@kern1 \kern-\dimen@\fi
		\else
		\overline{\rel@kern{-0.6}\kern\dimen@#1}%
		\fi
	}%
	\macc@depth\@ne
	\let\math@bgroup\@empty \let\math@egroup\macc@set@skewchar
	\mathsurround\z@ \frozen@everymath{\mathgroup\macc@group\relax}%
	\macc@set@skewchar\relax
	\let\mathaccentV\macc@nested@a
	\if#31
	\macc@nested@a\relax111{#1}%
	\else
	\def\gobble@till@marker##1\endmarker{}%
	\futurelet\first@char\gobble@till@marker#1\endmarker
	\ifcat\noexpand\first@char A\else
	\def\first@char{}%
	\fi
	\macc@nested@a\relax111{\first@char}%
	\fi
	\endgroup
}
\newcommand{\PSh}{\mathrm{PSh}}
\newcommand{\Sh}{\mathrm{Sh}}
\newcommand{\HSh}{\mathrm{Sh}^{\mathrm{hyp}}}
\newcommand{\Mod}{\mathrm{Mod}}
\newcommand{\cS}{\categ{Spc}}
\newcommand{\categ}[1]{\textbf{\textup{#1}}}
\newcommand{\fib}{\mathrm{fib}}
\newcommand{\PrL}{\categ{Pr}^{\mathrm{L}}}
\newcommand{\PrR}{\mathcal P \mathrm{r}^{\mathrm{R}}}
\newcommand{\ev}{\mathrm{ev}}
\newcommand{\id}{\mathrm{id}}
\newcommand{\op}{^\mathrm{op}}
\tikzset{
  closed/.style = {decoration = {markings, mark = at position 0.5 with { \node[transform shape, xscale = .8, yscale=.4] {/}; } }, postaction = {decorate} },
  open/.style = {decoration = {markings, mark = at position 0.5 with { \node[transform shape, scale = .7] {$\circ$}; } }, postaction = {decorate} }
}
\DeclareMathOperator{\Fun}{Fun}
\DeclareMathOperator{\FunR}{Fun^R}
\DeclareMathOperator{\Map}{Map}
\DeclareMathOperator{\Sp}{Sp}
\DeclareMathOperator*{\colim}{colim}
\crefname{prop}{Proposition}{Propositions}
\Crefname{prop}{Proposition}{Propositions}
\renewcommand{\Sp}{\mathbf{Sp}}
\DeclareMathOperator{\Stab}{Sp}
\newcommand{\EE}{\mathbb{E}}
\newcommand{\ZZ}{\mathbb{Z}}
\renewcommand{\PrR}{\mathbf{Pr}^{\mathrm{R}}}
\renewcommand{\PrL}{\mathbf{Pr}^{\mathrm{L}}}
\newcommand{\period}{\rlap{\  .}}
\newcommand{\comma}{\rlap{\  ,}}
\begin{document}
\title{Standard $t$-structures}

\author{Peter J. Haine}
\address{Peter J. Haine, Department of Mathematics, University of California, Evans Hall, Berkeley, CA 94720, USA}
\email{peterhaine@berkeley.edu}

\author{Mauro Porta}
\address{Mauro Porta, Institut de Recherche Mathématique Avancée, 7 Rue René Descartes, 67000 Strasbourg, France}
\email{porta@math.unistra.fr}

\author{Jean-Baptiste Teyssier}
\address{Jean-Baptiste Teyssier, Institut de Mathématiques de Jussieu, 4 place Jussieu, 75005 Paris, France}
\email{jean-baptiste.teyssier@imj-prg.fr}

\subjclass[2020]{18F10,18G80,18N60}
\keywords{Stable $\infty$-category, $\infty$-topos, $t$-structure}

\begin{abstract}
	We provide a general construction of induced $t$-structures, that generalizes standard $t$-structures for $\infty$-categories of sheaves.
	More precisely, given a presentable $\infty$-category $\cX$ and a presentable stable $\infty$-category $\cE$ equipped with an accessible $t$-structure $\tau = (\cE_{\geqslant 0}, \cE_{\leqslant 0})$, we show that $\cX \otimes \cE$ is equipped with a canonical $t$-structure whose coconnective part is given in $\cX \otimes \cE_{\leqslant 0}$.
	When $\cX$ is an $\infty$-topos, we give a more explicit description of the connective part as well.
\end{abstract}

\maketitle


\tableofcontents


\section{Introduction}

Let $X$ be a topological space and let $ A $ be a connective $\EE_1$-ring spectrum.
Then the $ \infty $-categories $\Sh(X;\Mod_A)$ and $\HSh(X;\Mod_A)$ of sheaves and hypersheaves of $ A $-modules on $ X $ inherit an induced $t$-structure, called the \textit{standard $t$-structure}.
See \cite[\S1.3.2]{Lurie_SAG}.
In this short note, using properties of the tensor product of presentable $\infty$-categories, we revisit this construction in greater generality.
Given a presentable stable $\infty$-category $\cE$ equipped with a $t$-structure $ (\cE_{\geqslant 0}, \cE_{\leqslant 0})$, we show that under very mild conditions, the $\infty$-categories $\Sh(X;\cE)$ and $\HSh(X;\cE)$ of $ \cE $-valued sheaves and hypersheaves inherit an induced $t$-structure, that generalizes the case $\cE = \Mod_A$.
Along the way, we establish a certain exactness property of this operation that seems interesting on its own right, and that we briefly describe now.

\medskip

Given a presentable stable $\infty$-category $\cE$ equipped with an accessible $t$-structure $ (\cE_{\geqslant 0}, \cE_{\leqslant 0})$, both $\cE_{\geqslant 0}$ and $\cE_{\leqslant -1}$ are presentable, and the square
\begin{equation*} 
	\begin{tikzcd}
		\cE_{\geqslant 0} \arrow[r, hooked, "i_{\geqslant n}"] \arrow{d} & \cE \arrow{d}{\tau_{\leqslant -1}} \\
		0 \arrow{r} & \cE_{\leqslant n-1}
	\end{tikzcd} 
\end{equation*}
is both a pullback and a pushout square in $\PrL$ (see \cref{recollection:t_structure}).
Tensoring with a second presentable $\infty$-category $\cX$, we obtain a pushout square
\begin{equation}\label{eq:tensoring_t_structure}
	\begin{tikzcd}
		\cX \otimes \cE_{\geqslant 0} \arrow{d} \arrow{r}{\cX \otimes i_{\geqslant 0}} & \cX \otimes \cE \arrow{d}{\cX \otimes \tau_{\leqslant -1}} \\
		0 \arrow{r} & \cX \otimes \cE_{\leqslant-1} \period
	\end{tikzcd}
\end{equation}
However, in general this square is not a pullback.
Nevertheless,  $\cX \otimes \tau_{\leqslant -1}$ is a localization functor (see \cref{lem:tensor_product_localization}) and we can therefore identify $\cX \otimes \cE_{\leqslant -1}$ with a full subcategory of $\cX \otimes \cE$.
We have:

\begin{thm}[{\Cref{cor:standard_t_structure_fundamentals,cor:standard_t_structure_topos}}]\label{intro_thm:main_thm}
	Let $ \cX $ be a presentable $\infty$-category and let $\cE$ be a presentable stable $\infty$-category equipped with an accessible $t$-structure $ (\cE_{\geqslant 0}, \cE_{\leqslant 0})$.
	\begin{enumerate}\itemsep=0.2cm
		\item The presentable stable $\infty$-category $\cX \otimes \cE$ admits a $t$-structure of the form
		\begin{equation*} 
			\big( (\cX \otimes \cE)_{\geqslant 0}, \cX \otimes \cE_{\leqslant 0} \big) \comma 
		\end{equation*}
		where $(\cX \otimes \cE)_{\geqslant 0}$ is the full subcategory of $\cX \otimes \cE$ generated under colimits and extensions by the essential image of the functor $\cX \otimes i_{\geqslant 0} \colon \cX \otimes \cE_{\geqslant 0} \to \cX \otimes \cE$.
		
		\item If $\cX$ is an $\infty$-topos, \eqref{eq:tensoring_t_structure} is a pullback square.
		In particular $\cX \otimes \cE_{\geqslant 0}$ becomes a full subcategory of $\cX \otimes \cE$, and the $t$-structure of the previous point simply becomes
		\begin{equation*} 
			\big( \cX \otimes \cE_{\geqslant 0}, \cX \otimes \cE_{\leqslant 0} \big) \period 
		\end{equation*}
	\end{enumerate}
\end{thm}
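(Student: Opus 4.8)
The plan is to deduce part~(1) from the standard construction of the $t$-structure generated by a small set of objects, and part~(2) from the left-exactness of $\cX \otimes (-)$ on $\infty$-topoi; in both cases the bridge is the pushout square~\eqref{eq:tensoring_t_structure} together with \cref{lem:tensor_product_localization}. For part~(1): since $\cE$ is presentable stable and $\cX$ is presentable, $\cX \otimes \cE$ is presentable stable and $\cX \otimes \cE_{\geqslant 0}$ is presentable. Choose a small set $S_0$ of objects generating $\cX \otimes \cE_{\geqslant 0}$ under colimits and put $S \coloneqq (\cX \otimes i_{\geqslant 0})(S_0)$. By the existence result for accessible $t$-structures generated by a small set of objects (see Lurie, \emph{Higher Algebra}, Proposition~1.4.4.11), $\cX \otimes \cE$ carries an accessible $t$-structure whose connective part $(\cX \otimes \cE)_{\geqslant 0}$ is the smallest full subcategory containing $S$ and closed under colimits and extensions, and whose coconnective part is $S^{\perp} \coloneqq \{\, Y : \Map(s,Y) \simeq \ast \text{ for all } s \in S \,\}$. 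Since $\cX \otimes i_{\geqslant 0}$ preserves colimits and $S_0$ generates $\cX \otimes \cE_{\geqslant 0}$ under colimits, the essential image of $\cX \otimes i_{\geqslant 0}$ already lies in the colimit-closure of $S$, so $(\cX \otimes \cE)_{\geqslant 0}$ is exactly the full subcategory generated under colimits and extensions by that essential image --- the description in the statement. Using moreover that, for a fixed $Y$, the class of $X$ with $\Map(X,Y) \simeq \ast$ is closed under colimits and extensions, one finds that $S^{\perp}$ is the right orthogonal of the essential image of $\cX \otimes i_{\geqslant 0}$, so $(\cX \otimes \cE)_{\leqslant -1}$ is that right orthogonal.

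It remains to identify this right orthogonal with $\cX \otimes \cE_{\leqslant -1}$. By \cref{lem:tensor_product_localization}, $\cX \otimes \tau_{\leqslant -1}$ is a localization, so $\cX \otimes \cE_{\leqslant -1}$ sits inside $\cX \otimes \cE$ as a reflective full subcategory. By the universal property of the pushout~\eqref{eq:tensoring_t_structure}, a colimit-preserving functor out of $\cX \otimes \cE$ factors through $\cX \otimes \tau_{\leqslant -1}$ exactly when it sends every object in the essential image of $\cX \otimes i_{\geqslant 0}$ to an initial object, i.e.\ exactly when it inverts each arrow $0 \to (\cX \otimes i_{\geqslant 0})(a)$. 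Hence $\cX \otimes \cE_{\leqslant -1}$, seen inside $\cX \otimes \cE$, is the full subcategory of objects local for this family of arrows, namely $\{\, Y : \Map((\cX \otimes i_{\geqslant 0})(a), Y) \simeq \ast \text{ for all } a \,\}$, which is again the right orthogonal of the essential image of $\cX \otimes i_{\geqslant 0}$. Comparing with the previous paragraph gives $(\cX \otimes \cE)_{\leqslant -1} = \cX \otimes \cE_{\leqslant -1}$; a shift by $1$ then rewrites the coconnective part as $\cX \otimes \cE_{\leqslant 0}$, which finishes part~(1).

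For part~(2) it suffices to show that \eqref{eq:tensoring_t_structure} is a pullback square. Granting this, its left-hand edge exhibits $\cX \otimes \cE_{\geqslant 0}$ as the pullback of $0 \to \cX \otimes \cE_{\leqslant -1} \xleftarrow{\,\cX \otimes \tau_{\leqslant -1}\,} \cX \otimes \cE$, i.e.\ --- limits in $\PrL$ being computed underlying --- as $\Ker(\cX \otimes \tau_{\leqslant -1})$; and since the kernel of a reflective localization is the left orthogonal of its essential image, part~(1) and the $t$-structure axioms give $\Ker(\cX \otimes \tau_{\leqslant -1}) = {}^{\perp}(\cX \otimes \cE_{\leqslant -1}) = {}^{\perp}\big( (\cX \otimes \cE)_{\leqslant -1} \big) = (\cX \otimes \cE)_{\geqslant 0}$, so the $t$-structure of part~(1) becomes $\big( \cX \otimes \cE_{\geqslant 0},\, \cX \otimes \cE_{\leqslant 0} \big)$. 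To prove that \eqref{eq:tensoring_t_structure} is a pullback, observe that it is the image under $\cX \otimes (-)$ of the square defining the $t$-structure on $\cE$, which by \cref{recollection:t_structure} is a pullback in $\PrL$ (it realizes $\cE_{\geqslant 0}$ as the fiber of $\tau_{\leqslant -1}$); and when $\cX$ is an $\infty$-topos, $\cX \otimes (-) \colon \PrL \to \PrL$ preserves this pullback, since for such $\cX$ the functor $\cX \otimes (-)$ --- which sends $\cE$ to the $\infty$-category of $\cE$-valued sheaves on $\cX$ --- is left exact, an $\infty$-topos being a left-exact localization of a presheaf $\infty$-topos with sheafification still left exact after passing to coefficients in a presentable category.

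The step I expect to be the main obstacle is precisely this last input: the left-exactness of $\cX \otimes (-)$ on $\infty$-topoi, equivalently the fact that $\cX \otimes (-)$ commutes with forming the kernel of the localization $\tau_{\leqslant -1}$. Everything else is formal bookkeeping with reflective localizations, orthogonality classes, and the pushout~\eqref{eq:tensoring_t_structure}. One delicate point to keep track of throughout is the distinction between the full subcategory generated under colimits and extensions --- which need not be closed under desuspension, matching the connective part of a $t$-structure --- and the localizing subcategory it generates; the arguments above are arranged so as to use only the former.
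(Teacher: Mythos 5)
Your part (1) is essentially correct and close to the paper's own argument: both routes invoke Lurie's Proposition 1.4.4.11 to generate the $t$-structure from a subcategory closed under colimits and extensions, and your identification of the coconnective part via the universal property of the pushout \eqref{eq:tensoring_t_structure} is a mild repackaging of the paper's computation, which instead uses the explicit model $\cX \otimes \cE \simeq \FunR(\cX\op,\cE)$ and the adjunction formula $\Map_{\cX\otimes\cE}(X \otimes E, F) \simeq \Map_{\cE}(E, F(X))$ of \cref{recollection:Yoneda_with_coefficients} to check directly that $F$ lies in $(\cX\otimes\cE)_{\leqslant -1}$ if and only if $F$ factors through $\cE_{\leqslant -1}$.

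Part (2), however, has a genuine gap, and it sits exactly where you suspected. You deduce that \eqref{eq:tensoring_t_structure} is a pullback from the assertion that $\cX \otimes (-)\colon \PrL \to \PrL$ is left exact when $\cX$ is an $\infty$-topos, and you justify this by the left exactness of sheafification. These are two different notions of exactness: the localization $\PSh(\cC) \to \Sh(\cC,\tau)$ preserving finite limits of \emph{objects} says nothing about the functor $\Sh(\cC,\tau)\otimes(-)$ preserving finite limits of \emph{diagrams of presentable $\infty$-categories}. No such general left-exactness is available (the paper's closing speculation in the introduction, phrased as a $\mathsf{Tor}_1$-vanishing, only makes sense because $\cX\otimes(-)$ is not exact in general), and establishing the single pullback \eqref{eq:tensoring_t_structure} is the entire content of Section 3 of the paper: one first proves $\cX \otimes \cS_\ast^{\geqslant k} \simeq \cX_\ast^{\geqslant k}$ via the bar--cobar equivalence with grouplike $\EE_k$-monoids, then deduces full faithfulness of $\cX \otimes \Sp_{\geqslant 0} \to \cX \otimes \Sp$ from the presentation of $\Sp_{\geqslant 0}$ as the inverse limit of the $\cS_\ast^{\geqslant n}$ along $\Omega$ (a limit in $\PrR$, which $\cX\otimes(-)$ \emph{does} preserve), and then bootstraps to general $\cE$ via Grothendieck prestability of $\cX \otimes \cE_{\geqslant 0}$ and \cref{lem:tensor_product_connectives_extensions}. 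Note also that the paper's proof of this step needs $\tau$ to be compatible with filtered colimits and right complete (in order to identify $\Stab(\cE_{\geqslant 0}) \simeq \cE$); your argument would dispense with these hypotheses entirely, a further sign that the left-exactness claim is too strong as stated. The surrounding bookkeeping in your part (2) --- kernel of the localization equals the left orthogonal of its image, hence equals $(\cX\otimes\cE)_{\geqslant 0}$ --- is fine once the pullback is granted.
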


\personal{(Mauro) This is hard speculation but I somehow feel it appealing. We can get rid of it, or try to give it some better form to make it into a selling point.}
We conclude the introduction by describing a heuristic speculation.
When $\cX \simeq \PSh(\cC)$ is a presheaf category, one has
\begin{equation*} 
	\PSh(\cC) \otimes \cY \simeq \Fun(\cC\op, \cY) \comma 
\end{equation*}
and in particular it follows that $\PSh(\cC) \otimes (-)$ commutes with both limits and colimits in $\PrL$.
On the other hand, any presentable $\infty$-category $\cX$ can be written as a localization of a presheaf category $\PSh(\cC_0)$.
If Vopěnka's principle holds, the left orthogonal complement of $\cX$ inside $\PSh(\cC)$ is also be presentable, and it is therefore be possible to write it as the localization of a second presheaf category $\PSh(\cC_1)$.
Iterating this process, we construct a ``resolution'' of $\cX$ by presheaf $ \infty $-categories, which have an exact behavior with respect to the tensor product in $\PrL$.
We would then be able to define $\mathsf{Tor}$-categories, and \Cref{intro_thm:main_thm} would be stating that when $\cX$ is an $\infty$-topos,
\begin{equation*} 
	\mathsf{Tor}_1(\cX, \cE_{\leqslant 0}) = 0 \period
\end{equation*}


\subsection*{Acknowledgments}

PH gratefully acknowledges support from the NSF Mathematical Sciences Postdoctoral Research Fellowship under Grant \#DMS-2102957. 


\section{Standard $t$-structures for presentable $\infty$-categories}\label{subsec:t_structures_presentable}


\subsection{Reminders on the tensor product in $\PrL$}

Before we begin, we recall some standard properties of the tensor product in $\PrL$.

\begin{lem}\label{lem:tensor_product_localization}
	Let $L \colon \cY \to \cY'$ be a localization functor in $\PrL$.
	For every $\cX \in \PrL$, the induced functor $\id_\cX \otimes L \colon \cX \otimes \cY \to \cX \otimes \cY'$ is again a localization.
\end{lem}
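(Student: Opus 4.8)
The plan is to produce a fully faithful right adjoint to $\id_\cX \otimes L$, since a morphism of $\PrL$ is a localization exactly when its right adjoint is fully faithful (and every morphism of $\PrL$ has a right adjoint, which is moreover accessible). Write $R \colon \cY' \to \cY$ for the fully faithful, accessible, limit-preserving right adjoint of $L$, a morphism of $\PrR$. The first step is to identify the right adjoint of $\id_\cX \otimes L \colon \cX \otimes \cY \to \cX \otimes \cY'$ with the functor obtained by applying $\cX \otimes (-)$ to $R$: this uses that the passage to adjoints $\PrL \simeq (\PrR)^{\op}$ is symmetric monoidal (see §4.8.1 of Lurie's \emph{Higher Algebra}), so that it carries $\id_\cX \otimes L$, formed in $\PrL$, to $\id_\cX \otimes R$, formed in $\PrR$.

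Next I would make this right adjoint explicit via the standard identification $\cX \otimes \cZ \simeq \FunR(\cX\op, \cZ)$, the full subcategory of $\Fun(\cX\op, \cZ)$ on the accessible functors that preserve small limits. Under this identification $\cX \otimes (-) \colon \PrR \to \PrR$ acts on a morphism $g$ by post-composition with $g$; this is well posed precisely because a right adjoint $g$ preserves small limits, so $g \circ F$ is again accessible and limit-preserving whenever $F$ is. (By contrast, the action of $\cX \otimes (-)$ on morphisms of $\PrL$ is \emph{not} plain post-composition — this is why we route through $\PrR$.) Hence the right adjoint of $\id_\cX \otimes L$ is identified with $R \circ (-) \colon \FunR(\cX\op, \cY') \to \FunR(\cX\op, \cY)$.

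Finally, post-composition with a fully faithful functor is fully faithful: for $F, G \colon \cX\op \to \cY'$, the mapping space $\Map(R \circ F, R \circ G)$ in $\Fun(\cX\op,\cY)$ is the end of $\xi \mapsto \Map_\cY(RF(\xi), RG(\xi))$, which by full faithfulness of $R$ agrees with the end of $\xi \mapsto \Map_{\cY'}(F(\xi), G(\xi))$, that is $\Map(F, G)$; this persists on restricting to the full subcategories $\FunR(\cX\op, -)$. Since $R$ is fully faithful, so is $R \circ (-)$, and therefore $\id_\cX \otimes L$ is a localization.

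The step I expect to be the main obstacle is the identification of the right adjoint in the first two paragraphs: one must keep track of the two genuinely different actions of $\cX \otimes (-)$ on morphisms of $\PrL$ versus $\PrR$, and invoke the compatibility of the monoidal structures under the adjoint duality. One can sidestep the $\PrL$–$\PrR$ comparison and instead check the adjunction $(\id_\cX \otimes L) \dashv (R \circ (-))$ by hand — reducing, via cocontinuity, to pure tensors $x \otimes y$, where the verification again comes down to $R$ preserving limits — or describe $\cX \otimes \cY'$ as the localization of $\cX \otimes \cY$ at the small set of morphisms $x \otimes s$, for $x$ ranging over a set of generators of $\cX$ and $s$ over a set of generators of the strongly saturated class inverted by $L$; but pinning down the local objects of $\cX \otimes \cY$ is then the crux.
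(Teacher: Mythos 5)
Your proposal is correct and follows essentially the same route as the paper: identify the right adjoint of $\id_\cX \otimes L$ with postcomposition by the fully faithful right adjoint under the equivalence $\cX \otimes (-) \simeq \FunR(\cX\op, -)$, then observe that postcomposition with a fully faithful functor is fully faithful (the paper cites \cite[Lemma 5.2]{Gepner_Lax_colimits} for this last step, where you give the end computation directly).
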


\begin{proof}
	We have to check that the right adjoint to $\id_\cX \otimes L$ is fully faithful.
	Write $j \colon \cY' \to \cY$ for the fully faithful right adjoint to $L$.
	Using the identifications
	\begin{equation*} 
		\cX \otimes \cY \simeq \FunR(\cX\op, \cY) \qquad \text{and} \qquad \cX \otimes \cY' \simeq \FunR(\cX\op, \cY') 
	\end{equation*}
	provided by \cite[Proposition 4.8.1.17]{Lurie_Higher_algebra}, we see that the right adjoint to $\id_\cX \otimes L$ is identified with the functor induced by composition with $j$.
	In particular, it follows from \cite[Lemma 5.2]{Gepner_Lax_colimits} that it is fully faithful.
\end{proof}

\begin{recollection}\label{recollection:Yoneda_with_coefficients}
	Let $\cX$ be a presentable $\infty$-category and let $X \in \cX$ be an object.
	It follows from \cite[Corollary 4.4.4.9]{HTT} that the functor $ \Map_\cX(X,-) \colon \cX \to \cS $ admits a left adjoint, which we denote by 
	\begin{equation*}
		i_X \coloneqq (-) \otimes X \colon \cS \to \cX \period 
	\end{equation*}
	The functor $ i_X $ is the unique colimit-preserving functor $ \cS \to \cX $ sending the terminal object to $ X $.
	Now let $\cY$ be a second presentable $\infty$-category and consider the induced functor
	\begin{equation*} 
		i_X \otimes \id_\cY \colon \cY \to \cX \otimes \cY \period 
	\end{equation*}
	This functor takes an object $Y \in \cY$ to the elementary tensor $X \otimes Y$. 
	Under the identifications
	\begin{equation*}
		\cX \otimes \cY \simeq \FunR(\cY\op, \cX) \qquad \text{and} \qquad \cY \simeq \FunR(\cY\op, \cS)
	\end{equation*}
	provided by \cite[Proposition 4.8.1.17]{Lurie_Higher_algebra}, we see that the right adjoint to $i_X \otimes \id_\cY$ can be explicitly described as the functor given by postcomposition with $\Map_\cX(X,-)$.
	Unraveling the definitions, we can alternatively describe this right adjoint as the evaluation functor
	\begin{align*}
		\ev_X \colon \cX \otimes \cY \simeq \FunR(\cX\op, \cY) &\to \cY  \\ 
		F &\mapsto F(X) \period
	\end{align*}
	In particular, we deduce that for every $F \in \cX \otimes \cY$, one has a natural identification
	\begin{equation}\label{eq:Yoneda_with_coefficients}
		\Map_{\cX \otimes \cY}( X \otimes Y, F ) \simeq \Map_\cY(Y, F(X)) \comma
	\end{equation}
	where $F$ is viewed as a limit-preserving functor $F \colon \cX\op \to \cY$.
\end{recollection}


\subsection{Standard $t$-structures}

\begin{recollection}\label{recollection:t_structure}
	Let $\cE$ be a presentable stable $\infty$-category equipped with an accessible $t$-structure $\tau = (\cE_{\geqslant 0}, \cE_{\leqslant 0})$.
	For every $n \in \ZZ$, the full subcategory $\cE_{\leqslant n}$ is an accessible localization of $\cE$, and therefore it is itself presentable.
	Notice that
	\begin{equation*} 
		\begin{tikzcd}
			\cE_{\geqslant n} \arrow[r, "i_{\geqslant n}", hooked] \arrow{d} & \cE \arrow{d}{\tau_{\leqslant n-1}} \\
			0 \arrow{r} & \cE_{\leqslant n-1}
		\end{tikzcd} 
	\end{equation*}
	is a pullback square.
	In particular, $\cE_{\geqslant n}$ is presentable as well.
	It automatically follows that the above square is also a pushout in $\PrL$.
\end{recollection}

The following lemma follows immediately from the fact that in a stable $\infty$-category a null sequence is a fiber sequence if and only if it is a cofiber sequence:

\begin{lem}\label{lem:kernels_stable_under_extensions}
	Let $f \colon \cE \to \cY$ be a functor between presentable $\infty$-categories.
	Assume that $\cE$ is stable, that $\cY$ is pointed (with zero object $0_\cY$) and that $f$ is either left exact or right exact.
	Then
	\begin{equation*} 
		\ker(f) \coloneqq \{ E \in \cE \mid f(E) \simeq 0_\cY \} 
	\end{equation*}
	is closed under extensions.
	In particular, if $\tau = (\cE_{\geqslant 0}, \cE_{\leqslant 0})$ is a $t$-structure on $\cE$, then for every $n \in \Z$ the full subcategories $\cE_{\leqslant n}$ and $\cE_{\geqslant n}$ are closed under extensions.
\end{lem}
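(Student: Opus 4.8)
The plan is to push everything through the single structural feature of the source category: in the stable $\infty$-category $\cE$ a null sequence is a fiber sequence precisely when it is a cofiber sequence. Consequently an ``extension'' in $\cE$ is simultaneously a pullback square and a pushout square with a zero corner, so an exactness hypothesis on $f$ of \emph{either} variance suffices.

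Concretely, for the first assertion I would start from an extension $E' \to E \to E''$ in $\cE$ with $E', E'' \in \ker(f)$, and view it as a square with corners $E'$, $E$, $0_\cE$, $E''$ that is at the same time a pullback and a pushout (stability of $\cE$). If $f$ is left exact, it preserves this pullback square together with the terminal object $0_\cE$, which it sends to the terminal — hence zero — object $0_\cY$ of the pointed category $\cY$; the resulting pullback square in $\cY$ has corners $f(E')$, $f(E)$, $0_\cY$, $f(E'')$, and since $f(E'') \simeq 0_\cY$ the map $0_\cY \to f(E'')$ is an equivalence, so base change along it identifies $f(E) \simeq f(E') \simeq 0_\cY$; thus $E \in \ker(f)$. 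The right exact case is formally dual: $f$ now preserves the pushout square and the initial object, and the pushout of $0_\cY \leftarrow f(E') \to f(E)$ along the equivalence $f(E') \simeq 0_\cY$ recovers $f(E)$, forcing $f(E) \simeq f(E'') \simeq 0_\cY$.

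For the ``in particular'' clause I would record that $\cE_{\geqslant n} = \ker(\tau_{\leqslant n-1})$ and $\cE_{\leqslant n} = \ker(\tau_{\geqslant n+1})$, where the targets $\cE_{\leqslant n-1}$ and $\cE_{\geqslant n+1}$ are pointed since they contain the zero object of $\cE$. As a left adjoint to an inclusion, $\tau_{\leqslant n-1}$ preserves colimits and is in particular right exact; as a right adjoint, $\tau_{\geqslant n+1}$ preserves limits and is in particular left exact. Applying the first assertion to these two functors gives the claim.

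I do not expect a genuine obstacle: the whole content is that fiber and cofiber sequences coincide in $\cE$, which makes a one-sided exactness assumption on $f$ enough. The only points that want a moment's attention are verifying that a left (resp.\ right) exact functor into a pointed $\infty$-category carries $0_\cE$ to $0_\cY$ — which uses that in $\cE$ the zero object is both terminal and initial — and keeping straight which of $\tau_{\leqslant n-1}$, $\tau_{\geqslant n+1}$ is left exact and which is right exact when invoking the first assertion.
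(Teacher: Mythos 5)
Your argument is correct and takes essentially the same route as the paper's: both exploit that in the stable $\infty$-category $\cE$ an extension is simultaneously a pullback and a pushout square with a zero corner, so that a left (resp.\ right) exact $f$ carries it to a pullback (resp.\ pushout) square in $\cY$ whose zero corners force $f(E) \simeq 0_\cY$. Your treatment of the ``in particular'' clause, writing $\cE_{\geqslant n} = \ker(\tau_{\leqslant n-1})$ and $\cE_{\leqslant n} = \ker(\tau_{\geqslant n+1})$ with the correct variance for each truncation functor, is exactly what the paper leaves implicit.
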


\personal{
\begin{proof}
	Let $E' \to E \to E''$ be a fiber sequence in $\cE$ and assume that $E'$ and $E''$ belong to $\ker(f)$.
	First, assume that $f$ is left exact.
	Applying $f$ to $E' \to E \to E''$, we see that the diagram
	\begin{equation*} 
		\begin{tikzcd}[ampersand replacement=\&]
			f(E') \arrow{r} \arrow{d} \& f(E) \arrow{d} \\
			0_\cY \arrow{r} \& f(E'')
		\end{tikzcd} 
	\end{equation*}
	is a pullback in $\cY$.
	By assumption, the bottom horizontal map is an equivalence, so the same goes for the top horizontal one.
	On the other hand, $f(E') \simeq 0_\cY$ by assumption.
	So, it follows that $E \in \ker(f)$ as well.
	When $f$ is right exact, the same reasoning applies, observing that $E' \to E \to E''$ is at the same time a cofiber sequence because $\cE$ is stable.
\end{proof}}

\begin{notation}\label{ntn:standard_notation}
	Let $\cX$ be a presentable $\infty$-category and let $\cE$ be a presentable stable $\infty$-category equipped with an accessible $t$-structure $\tau = (\cE_{\geqslant 0}, \cE_{\leqslant 0})$.
	For each $n \in \Z$, set
	\begin{equation*} 
		i_{\geqslant n}^\cX \coloneqq \id_\cX \otimes i_{\geqslant n} \colon \cX \otimes \cE_{\geqslant n} \to \cX \otimes \cE \qquad \text{and} \qquad \tau_{\leqslant n}^\cX \coloneqq \id_\cX \otimes \tau_{\leqslant n} \colon \cX \otimes \cE \to \cX \otimes \cE_{\leqslant n} \period 
	\end{equation*}
	Then the square
	\begin{equation*} 
		\begin{tikzcd}
			\cX \otimes \cE_{\geqslant n} \arrow{d} \arrow{r}{i_{\geqslant n}^\cX} & \cX \otimes \cE \arrow{d}{\tau_{\leqslant n-1}^\cX} \\
			0 \arrow{r} & \cX \otimes \cE_{\leqslant n-1}
		\end{tikzcd} 
	\end{equation*}
	is a pushout in $\PrL$; however, it typically is not a pullback.
	It follows from \cref{lem:tensor_product_localization} that the right adjoint to $\tau_{\leqslant n-1}^\cX$ is fully faithful.
	Under the identifications
	\begin{equation*} 
		\cX \otimes \cE \simeq \FunR(\cX\op, \cE) \qquad \text{and} \qquad \cX \otimes \cE_{\leqslant n-1} \simeq \FunR(\cX\op, \cE_{\leqslant n-1}) \comma 
	\end{equation*}
	we see that the right adjoint to $\tau_{\leqslant n-1}^\cX$ is given by composition with $i_{\leqslant n-1}$.
	In particular, $\cX \otimes \cE_{\leqslant n-1}$ is naturally identified with the full subcategory of $\FunR(\cX\op, \cE)$ spanned by those right adjoints $F \colon \cX\op \to \cE$ that factor through $\cE_{\leqslant n-1}$.
	Define
	\begin{equation*} 
		(\cX \otimes \cE)_{\geqslant n} \coloneqq \ker\big( \tau_{\leqslant n-1}^\cX \colon \cX \otimes \cE \to \cX \otimes \cE_{\leqslant n-1} \big) \period 
	\end{equation*}
\end{notation}

\begin{lem}\label{lem:t_structure_standard_properties}
	In the setting of \Cref{ntn:standard_notation}:
	\begin{enumerate}\itemsep=0.2cm
		\item For each $n \in \Z$, both $\cX \otimes \cE_{\leqslant n}$ and $(\cX \otimes \cE)_{\geqslant n}$ are closed under extensions in $ \cX \otimes \cE $.
		
		\item Let $\cX_\bullet \colon I \to \PrL$ be a diagram with limit $\cX$. Assume that for every transition morphism $i \to j$, the induced functor $\cX_i \to \cX_j$ is both a left and a right adjoint.
		Then for each $n \in \Z$, the natural functors
		\begin{equation*} 
			\cX \otimes \cE_{\leqslant n} \to \lim_{i \in I} \cX_i \otimes \cE_{\leqslant n}
		\end{equation*}
		and
		\begin{equation*} 
			\lim_{i \in I} (\cX_i \otimes \cE)_{\geqslant 0} \to \Big( \big( \lim_{i \in I} \cX_i \big) \otimes \cE \Big)_{\geqslant 0} 
		\end{equation*}
		are equivalences.
	\end{enumerate}
\end{lem}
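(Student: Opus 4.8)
The plan is to treat the two parts separately. \emph{Part (1).} By \cref{ntn:standard_notation}, under the identification $\cX \otimes \cE \simeq \FunR(\cX\op, \cE)$ the full subcategory $\cX \otimes \cE_{\leqslant n}$ is spanned by the limit-preserving functors $F$ with $F(X) \in \cE_{\leqslant n}$ for every $X \in \cX$. For each $X$, the evaluation $\ev_X \colon \cX \otimes \cE \to \cE$ of \cref{recollection:Yoneda_with_coefficients} is a right adjoint, hence left exact, so it carries a cofiber sequence $F' \to F \to F''$ in the stable \category $\cX \otimes \cE$ to a fiber---equivalently cofiber---sequence in $\cE$. Thus if $F', F'' \in \cX \otimes \cE_{\leqslant n}$, then $F(X)$ is an extension of objects of $\cE_{\leqslant n}$ and so lies in $\cE_{\leqslant n}$ by \cref{lem:kernels_stable_under_extensions}, i.e.\ $F \in \cX \otimes \cE_{\leqslant n}$. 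For $(\cX \otimes \cE)_{\geqslant n} = \ker(\tau^\cX_{\leqslant n-1})$ I would instead apply \cref{lem:kernels_stable_under_extensions} directly to $f = \tau^\cX_{\leqslant n-1}$: it is a morphism of $\PrL$, hence right exact, its source $\cX \otimes \cE$ is stable, and its target $\cX \otimes \cE_{\leqslant n-1}$ is pointed, being a reflective localization of the pointed \category $\cX \otimes \cE$ by \cref{lem:tensor_product_localization}.

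\emph{Part (2), the key input.} The crux is the standard fact that if $F \colon I \to \PrL$ is a diagram all of whose transition functors admit left adjoints, then the limit of $F$ in $\PrL$ is canonically the colimit in $\PrL$ of the diagram $F^{\mathrm L} \colon I\op \to \PrL$ of left adjoints, with the colimit coprojections left adjoint to the limit projections; this follows by combining the computation of limits in $\PrL$ inside $\widehat{\Cat}_\infty$ with the dual computation of colimits in $\PrL$ via passage to right adjoints \cite[\S5.5.3]{HTT} (the right-adjoint diagram of $F^{\mathrm L}$ being $F$ itself). By hypothesis this applies to $\cX_\bullet$. Now for any presentable \category $\cV$, the functor $(-) \otimes \cV \colon \PrL \to \PrL$ preserves colimits, and $\cX_\bullet \otimes \cV$ again has transition functors admitting left adjoints (tensor an adjunction by $\id_\cV$), its left-adjoint diagram being $\cX^{\mathrm L}_\bullet \otimes \cV$; applying the fact to $\cX_\bullet$ and to $\cX_\bullet \otimes \cV$ yields
\[
	(\lim_I \cX_i) \otimes \cV \;\simeq\; \colim_{I\op}\big( \cX^{\mathrm L}_i \otimes \cV \big) \;\simeq\; \lim_I(\cX_i \otimes \cV) \comma
\]
and I would check this equivalence is the canonical comparison by noting that the limit projection to the $i$-th factor corresponds to $\pi_i \otimes \id_\cV$, the right adjoint of the coprojection $\sigma_i \otimes \id_\cV$. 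The first equivalence in (2) is the case $\cV = \cE_{\leqslant n}$ (presentable by \cref{recollection:t_structure}).

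\emph{Part (2), the connective statement.} Here the idea is to realize $(\cX_i \otimes \cE)_{\geqslant 0}$ as a limit. Since $\tau^{\cX_i}_{\leqslant -1}$ is a localization onto the pointed \category $\cX_i \otimes \cE_{\leqslant -1}$ (\cref{lem:tensor_product_localization}), its kernel $(\cX_i \otimes \cE)_{\geqslant 0}$ is the pullback $(\cX_i \otimes \cE) \times_{\cX_i \otimes \cE_{\leqslant -1}} 0$ in $\PrL$ (limits in $\PrL$ being computed in $\widehat{\Cat}_\infty$, this pullback picks out exactly the objects annihilated by $\tau^{\cX_i}_{\leqslant -1}$). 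These assemble into an $I$-diagram which is the pullback in $\Fun(I, \PrL)$ of $[\, i \mapsto \cX_i \otimes \cE \,] \to [\, i \mapsto \cX_i \otimes \cE_{\leqslant -1} \,] \leftarrow [\, i \mapsto 0 \,]$, so since limits commute with limits,
\[
	\lim_I (\cX_i \otimes \cE)_{\geqslant 0} \;\simeq\; \lim_I(\cX_i \otimes \cE) \times_{\lim_I(\cX_i \otimes \cE_{\leqslant -1})} \lim_I 0 \period
\]
Now $\lim_I 0 \simeq 0$, and by the previous paragraph (with $\cV = \cE$ and $\cV = \cE_{\leqslant -1}$) the other two terms are $\cX \otimes \cE$ and $\cX \otimes \cE_{\leqslant -1}$ with connecting map $\tau^\cX_{\leqslant -1}$; hence the right-hand side is $\ker(\tau^\cX_{\leqslant -1}) = (\cX \otimes \cE)_{\geqslant 0}$, and unwinding identifies the resulting equivalence with the stated comparison functor.

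\emph{Main obstacle.} The only genuinely substantive step is the key input in the second paragraph: establishing, with the correct compatibilities, that a limit in $\PrL$ along transition functors that are biadjoint is also a colimit, so that $(-) \otimes \cV$ commutes with such limits. Everything else---part (1) and the connective statement in (2)---is then formal, via \cref{lem:kernels_stable_under_extensions} and the pullback description of kernels of localizations. The other thing to be careful about throughout is matching the abstractly produced equivalences with the specific comparison functors named in the statement, which is handled by passing to right adjoints of the colimit coprojections.
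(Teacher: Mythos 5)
Your proposal is correct and follows essentially the same route as the paper: part (1) via objectwise evaluation plus \cref{lem:kernels_stable_under_extensions}, and part (2) by reducing the connective statement to the coconnective one (kernels are pullbacks, limits commute with limits) and then using that $(-)\otimes\cV$ preserves limits along biadjoint transition functors. The only difference is presentational: where you re-derive that last fact by identifying the limit with a colimit of the left-adjoint diagram, the paper simply cites \cite[Proposition 5.5.3.13 \& Theorem 5.5.3.18]{HTT} and \cite[Remark 4.8.1.24]{Lurie_Higher_algebra}, which encode the same duality.
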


\begin{proof}
	First we prove (1).
	The claim about $(\cX \otimes \cE)_{\geqslant n}$ is a simple consequence of the definitions and \cref{lem:kernels_stable_under_extensions}.
	We now deal with $\cX \otimes \cE_{\leqslant n}$.
	Consider a fiber sequence
	\begin{equation*} 
		\begin{tikzcd}
			F' \arrow{r} \arrow{d} & F \arrow{d} \\
			0 \arrow{r} & F''
		\end{tikzcd}
	\end{equation*}
	in $\cX \otimes \cE$.
	Assume first that both $F'$ and $F''$ belong to $\cX \otimes \cE_{\leqslant n}$.
	Under the identification $\cX \otimes \cE \simeq \FunR(\cX\op, \cE)$, observe that limits are computed objectwise.
	In other words, for every $X \in \cX$, the induced square
	\begin{equation*}
		\begin{tikzcd}
			F'(X) \arrow{r} \arrow{d} & F(X) \arrow{d} \\
			0 \arrow{r} & F''(X)
		\end{tikzcd}
	\end{equation*}
	is a pullback in $\cE$.
	The assumption implies that both $F'(X)$ and $F''(X)$ belong to $\cE_{\leqslant n}$, so the same is true of $F(X)$.
	In other words, $F \in \cX \otimes \cE_{\leqslant n}$.
	
	We now prove (2).
	Since limits commute with limits, it is enough to prove the statement concerning $\cX \otimes \cE_{\leqslant n}$.
	However, the assumption on the diagram and \cite[Proposition 5.5.3.13 \& Theorem 5.5.3.18]{HTT} imply that the limit can equally be computed in $\PrR$.
	The conclusion now follows from \cite[Remark 4.8.1.24]{Lurie_Higher_algebra} (see also \cite[Lemma 4.2.2]{Beyond_conicality}).
\end{proof}

The following is our main result about a $ t $-structure on $ \cX \otimes \cE $ when $ \cX $ is an arbitrary presentable $ \infty $-category.

\begin{prop}\label{cor:standard_t_structure_fundamentals}
	In the setting of \Cref{ntn:standard_notation}, there exists a unique $t$-structure
	\begin{equation*}
		\big( (\cX \otimes \cE)_{\geqslant 0}, (\cX \otimes \cE)_{\leqslant 0} \big)
	\end{equation*}
	on $ \cX \otimes \cE $ whose connective part coincides with the full subcategory $(\cX \otimes \cE)_{\geqslant 0}$ of $\cX \otimes \cE$ introduced in \Cref{ntn:standard_notation}.
	In addition:
	\begin{enumerate}\itemsep=0.2cm
		\item We have $(\cX \otimes \cE)_{\leqslant 0} = \cX \otimes \cE_{\leqslant 0} $ as full subcategories of $ \cX \otimes \cE $.

		\item The connective part $(\cX \otimes \cE)_{\geqslant 0}$ is generated under colimits and extensions by objects of the form $X \otimes E$ for $X \in \cX$ and $E \in \cE_{\geqslant 0}$.
	\end{enumerate}
\end{prop}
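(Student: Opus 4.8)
The plan is to produce the $t$-structure from the standard existence result for $t$-structures generated by a small set of objects, and then to identify its connective part with $(\cX\otimes\cE)_{\geqslant 0} = \ker(\tau_{\leqslant -1}^\cX)$; that identification is where the real content lies, so it is the step I expect to be the main obstacle.

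First I would choose small sets $\{X_\alpha\}_\alpha \subseteq \cX$ and $\{E_\beta\}_\beta \subseteq \cE_{\geqslant 0}$ generating $\cX$ and $\cE_{\geqslant 0}$ under colimits, which exist since both categories are presentable ($\cE_{\geqslant 0}$ by \Cref{recollection:t_structure}). Because $i_{\geqslant 0}$ preserves colimits and the construction $(X,E) \mapsto X \otimes E$ preserves colimits in each variable separately, every elementary tensor $X \otimes E$ with $E \in \cE_{\geqslant 0}$ is a colimit of objects $X_\alpha \otimes E_\beta$; hence the full subcategory of $\cX \otimes \cE$ generated under colimits and extensions by the small set $S \coloneqq \{X_\alpha \otimes E_\beta\}$ coincides with the full subcategory $\cD$ generated under colimits and extensions by \emph{all} objects $X \otimes E$, $X \in \cX$, $E \in \cE_{\geqslant 0}$. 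Applying the standard construction of a $t$-structure generated by a set of objects \cite[Proposition 1.4.4.11]{Lurie_Higher_algebra} to $S$, using that $\cX \otimes \cE$ is presentable and stable, I obtain an accessible $t$-structure $(\cC_{\geqslant 0}, \cC_{\leqslant 0})$ on $\cX \otimes \cE$ with $\cC_{\geqslant 0} = \cD$. This already yields assertion (2), and since a $t$-structure is determined by its connective part (one has $\cC_{\leqslant 0} = (\cC_{\geqslant 1})^\perp$), uniqueness will be automatic once $\cC_{\geqslant 0}$ is identified with $(\cX\otimes\cE)_{\geqslant 0}$.

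The hard part is that identification. I would compute the orthogonal $\cC_{\leqslant -1} = (\cC_{\geqslant 0})^\perp$: since $\cC_{\geqslant 0} = \cD$ is generated under colimits and extensions by the $X \otimes E$ with $E \in \cE_{\geqslant 0}$, and for fixed $G$ the full subcategory $\{H : \Map_{\cX\otimes\cE}(H,G) \simeq \ast\}$ is closed under colimits and extensions, an object $G$ lies in $\cC_{\leqslant -1}$ if and only if $\Map_{\cX\otimes\cE}(X \otimes E, G) \simeq \ast$ for all $X \in \cX$ and $E \in \cE_{\geqslant 0}$. By \eqref{eq:Yoneda_with_coefficients} (see \Cref{recollection:Yoneda_with_coefficients}) this reads $\Map_\cE(E, G(X)) \simeq \ast$ for all such $X$ and $E$, that is, by the definition of $\cE_{\leqslant -1}$, $G(X) \in \cE_{\leqslant -1}$ for every $X \in \cX$; by the description recorded in \Cref{ntn:standard_notation} this says precisely $G \in \cX \otimes \cE_{\leqslant -1}$. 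So $\cC_{\leqslant -1} = \cX \otimes \cE_{\leqslant -1}$ as full subcategories of $\cX \otimes \cE$. On the other hand, $\tau_{\leqslant -1}^\cX$ is a localization whose fully faithful right adjoint identifies $\cX \otimes \cE_{\leqslant -1}$ with its local objects (\Cref{lem:tensor_product_localization} and \Cref{ntn:standard_notation}), so $F \in \ker(\tau_{\leqslant -1}^\cX)$ exactly when $F$ is left-orthogonal to every object of $\cX \otimes \cE_{\leqslant -1}$. Combining this with the $t$-structure identity $\cC_{\geqslant 0} = {}^\perp(\cC_{\leqslant -1})$ yields $\cC_{\geqslant 0} = {}^\perp(\cX \otimes \cE_{\leqslant -1}) = \ker(\tau_{\leqslant -1}^\cX) = (\cX \otimes \cE)_{\geqslant 0}$, establishing existence and uniqueness.

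For assertion (1) I would rerun this computation shifted by one: $(\cX \otimes \cE)_{\leqslant 0} = \cC_{\leqslant 0} = (\cC_{\geqslant 1})^\perp$, and $\cC_{\geqslant 1} = \cC_{\geqslant 0}[1]$ is generated under colimits and extensions by the $X \otimes E$ with $E \in \cE_{\geqslant 1}$, since $[1]$ is an exact autoequivalence carrying $\cE_{\geqslant 0}$ onto $\cE_{\geqslant 1}$. The same orthogonality argument then identifies $\cC_{\leqslant 0}$ with $\{G : G(X) \in \cE_{\leqslant 0}\text{ for all }X \in \cX\} = \cX \otimes \cE_{\leqslant 0}$. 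The only slightly fiddly points are the two ``detected on generators'' reductions — a full subcategory of a presentable stable $\infty$-category that is closed under colimits and extensions has the same left/right orthogonal as any family generating it under those operations — together with the matching of the local objects of $\tau_{\leqslant -1}^\cX$ with $\cX \otimes \cE_{\leqslant -1}$; both are already essentially contained in \Cref{lem:tensor_product_localization} and \Cref{ntn:standard_notation}.
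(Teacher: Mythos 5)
Your proposal is correct and follows essentially the same route as the paper: both arguments rest on \cite[Proposition 1.4.4.11]{Lurie_Higher_algebra} together with the identification $\Map_{\cX \otimes \cE}(X \otimes E, F) \simeq \Map_{\cE}(E, F(X))$ of \eqref{eq:Yoneda_with_coefficients}, applied to show that the relevant orthogonal complements coincide with $\cX \otimes \cE_{\leqslant -1}$ and $\cX \otimes \cE_{\leqslant 0}$. The only difference is organizational: the paper applies the existence criterion directly to $\ker(\tau_{\leqslant -1}^{\cX})$ and then matches the resulting $t$-structure with the one generated by elementary tensors via their common coconnective parts, whereas you build only the generated $t$-structure and recover the kernel description by a double-orthogonality argument; both are sound.
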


\begin{proof}
	It follows from \cref{lem:t_structure_standard_properties}-(1) that $(\cX \otimes \cE)_{\geqslant 0}$ is a full subcategory of $\cX \otimes \cE$ closed under colimits and extensions. 
	In particular, \cite[Proposition 1.4.4.11-(1)]{Lurie_Higher_algebra} applies, providing the existence (and uniqueness) of the required $t$-structure.
	The definition of $(\cX \otimes \cE)_{\leqslant 0}$ shows that
	\begin{equation*} 
		\cX \otimes \cE_{\leqslant -1} \subseteq (\cX \otimes \cE)_{\leqslant -1} \period 
	\end{equation*}
	To prove that equality holds, let $F \in (\cX \otimes \cE)_{\leqslant -1}$, and view $ F $ as a right adjoint functor $\cX\op \to \cE$.
	We have to prove that $F$ factors through $\cE_{\leqslant -1}$.
	Observe that the functoriality of the tensor product of $\infty$-categories implies that the composite
	\begin{equation*} 
		\begin{tikzcd}[sep=4em]
			\cX \otimes \cE_{\geqslant 0} \arrow[r, "\id_{\cX} \otimes i_{\geqslant 0}"] & \cX \otimes \cE \arrow[r, "\id_{\cX} \otimes \tau_{\leqslant -1}"] & \cX \otimes \cE_{\leqslant -1} 
		\end{tikzcd}
	\end{equation*}
	is zero.
	In other words, the functor $\id_\cX \otimes i_{\geqslant 0} \colon \cX \otimes \cE_{\geqslant 0} \to \cX \otimes \cE$ factors through $(\cX \otimes \cE)_{\geqslant 0}$.
	It follows that every object of the form $X \otimes E$, for $X \in \cX$ and $E \in \cE_{\geqslant 0}$, belongs to $(\cX \otimes \cE)_{\geqslant 0}$.
	In particular, the assumption $F \in (\cX \otimes \cE)_{\leqslant -1}$ guarantees that
	\begin{equation*} 
		\Map_\cE( E, F(X) ) \simeq \Map_{\cX \otimes \cE}( X \otimes E, F ) \simeq 0 \comma 
	\end{equation*}
	where the first equivalence follows from \cref{recollection:Yoneda_with_coefficients}, specifically \eqref{eq:Yoneda_with_coefficients}.
	Since this holds for every $X \in \cX$ and every $E \in \cE_{\geqslant 0}$, we deduce that $F$ factors through $\cE_{\leqslant -1}$.
	Thus, $\cX \otimes \cE_{\leqslant -1} = (\cX \otimes \cE)_{\leqslant -1}$.
	
	We now prove item (2). 
	Let $\cC \subset \cX \otimes \cE $ be the smallest full subcategory closed under colimits and extensions and containing objects of the form $X \otimes E$ for $X \in \cX$ and $E \in \cE_{\geqslant 0}$.
	Recall from \cite[Proposition 1.4.4.11-(2)]{Lurie_Higher_algebra} that $\cC$ is automatically presentable, and that therefore it gives rise to a $t$-structure $\tau' = (\cC, \cD)$ on $\cX \otimes \cE$.
	The same argument given above immediately implies that $\cD \subseteq \cX \otimes \cE_{\leqslant 0}$.
	Conversely, let $F \in \cX \otimes \cE_{\leqslant 0}$ and let $\cC_F$ be the full subcategory of $\cX \otimes \cE$ spanned by the objects $G$ such that $\Map_{\cX \otimes \cE}(G,F) \simeq 0$.
	By definition, $\cC_F$ is closed under colimits, and \cref{lem:kernels_stable_under_extensions} implies that $ \cC_F $ is closed under extensions as well.
	Moreover, $ \cC_F $ contains every object of the form $X \otimes E$ for $X \in \cX$ and $E \in \cE_{\geqslant 1}$.
	Thus, $ \cC_F $ contains $\cC[-1]$.
	It follows that $F \in \cD$, and hence that $\cD = \cX \otimes \cE_{\leqslant 0}$.
	The uniqueness of the $t$-structure implies then that $\cC = (\cX \otimes \cE)_{\geqslant 0}$, whence the conclusion.
\end{proof}

\begin{defin}
	In the setting of \Cref{ntn:standard_notation}, we refer to
	\begin{equation*}
		\tau^{\cX} \coloneqq \big( (\cX \otimes \cE)_{\geqslant 0}, (\cX \otimes \cE)_{\leqslant 0} \big)
	\end{equation*}
	as the \emph{standard $t$-structure} on $\cX \otimes \cE$ induced by the $t$-structure $\tau = (\cE_{\geqslant 0}, \cE_{\leqslant 0})$ on $\cE$.
	\personal{(Mauro) The notation $\tau^{\cX}$ is a bit awkward -- $\tau_\cX$ would perhaps look nicer. But it's compatible with the notation for the truncation functor already used $\tau^\cX_{\leqslant n}$ and $\tau^\cX_{\geqslant n}$, so I'd rather keep the upper $\cX$.}
\end{defin}

\begin{eg}\label{eg:sheaves_of_spectra}
	Let $\cX$ be an $\infty$-topos and let $\cE = \Sp$ be the $\infty$-category of spectra, equipped with its standard $t$-structure.
	In \cite[Proposition 1.3.2.7]{Lurie_SAG} it is shown that $\cX \otimes \Sp \simeq \Sh(\cX;\Sp)$ is equipped with a $t$-structure $(\Sh(\cX;\Sp)_{\geqslant 0}, \Sh(\cX;\Sp)_{\leqslant 0})$.
	In addition, \cite[Remark 1.3.2.6]{Lurie_SAG} provides a natural identification
	\begin{equation*} 
		\Sh(\cX;\Sp)_{\leqslant 0} \simeq \Sh(\cX;\Sp_{\leqslant 0}) \simeq \cX \otimes \Sp_{\leqslant 0} \period
	\end{equation*}
	It follows that the $t$-structure on $\cX \otimes \Sp$ coincides with the one provided by \cref{cor:standard_t_structure_fundamentals}.
	In particular, it follows from \cite[Proposition 1.3.2.7]{Lurie_SAG} that this $t$-structure is compatible with filtered colimits and right complete.
\end{eg}

\begin{cor}\label{cor:t_structure_right_exact}
	Let $f^{*} \colon \cX \to \cY$ be a functor in $\PrL$.
	Then the induced functor
	\begin{equation*} 
		f^{*}_\cE \coloneqq f^{*} \otimes \id_\cE \colon \cX \otimes \cE \to \cY \otimes \cE 
	\end{equation*}
	is right $t$-exact.
	If in addition $f^{*}$ is a left exact left adjoint between $\infty$-topoi and $\cE = \Sp$, then $f^{*}_{\cE} $ is also left $t$-exact.
\end{cor}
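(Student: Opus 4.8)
The plan is to treat the two assertions separately: the first is purely formal, while the second uses the concrete model of sheaves of spectra.

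For the first assertion, recall from \Cref{ntn:standard_notation} that $(\cX \otimes \cE)_{\geqslant 0} = \ker(\tau_{\leqslant -1}^\cX)$. By the bifunctoriality of the tensor product $- \otimes - \colon \PrL \times \PrL \to \PrL$, the square
\begin{equation*}
	\begin{tikzcd}
		\cX \otimes \cE \arrow{r}{\id_\cX \otimes \tau_{\leqslant -1}} \arrow{d}{f^* \otimes \id_\cE} & \cX \otimes \cE_{\leqslant -1} \arrow{d}{f^* \otimes \id_{\cE_{\leqslant -1}}} \\
		\cY \otimes \cE \arrow{r}{\id_\cY \otimes \tau_{\leqslant -1}} & \cY \otimes \cE_{\leqslant -1}
	\end{tikzcd}
\end{equation*}
commutes, both composites being canonically $f^* \otimes \tau_{\leqslant -1}$. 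Hence if $\cF \in \cX \otimes \cE$ satisfies $\tau_{\leqslant -1}^\cX(\cF) \simeq 0$, then $\tau_{\leqslant -1}^\cY(f^*_\cE \cF) \simeq (f^* \otimes \id_{\cE_{\leqslant -1}})(0) \simeq 0$, so $f^*_\cE \cF \in (\cY \otimes \cE)_{\geqslant 0}$; that is, $f^*_\cE$ is right $t$-exact. (One could equally argue that $f^*_\cE$ is a colimit-preserving, hence exact, functor between stable presentable $\infty$-categories, so it preserves colimits and extensions; by \cref{cor:standard_t_structure_fundamentals} it sends the generators $X \otimes E$, with $X \in \cX$ and $E \in \cE_{\geqslant 0}$, of $(\cX \otimes \cE)_{\geqslant 0}$ to the objects $f^*(X) \otimes E$, which generate $(\cY \otimes \cE)_{\geqslant 0}$, and the latter subcategory is closed under colimits and extensions by \cref{lem:t_structure_standard_properties}.)

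For the second assertion, I would first invoke \Cref{eg:sheaves_of_spectra}, together with its references to \cite[\S1.3.2]{Lurie_SAG}, to identify $\cX \otimes \Sp \simeq \Sh(\cX;\Sp)$ and $\cY \otimes \Sp \simeq \Sh(\cY;\Sp)$ with their standard $t$-structures, under which $f^*_\Sp$ becomes the functor induced by $f^*$ on sheaves of spectra. Realizing $\Sh(\cX;\Sp)$ via spectrum objects in $\cX_*$ as in \cite[\S1.3.2]{Lurie_SAG}, left exactness of $f^*$ guarantees that $f^*_\Sp$ is computed levelwise, and in particular that it commutes with $\Omega^\infty$ and with shifts. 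Next, I would use that a sheaf of spectra $\cF$ is coconnective if and only if $\Omega^\infty(\cF[n]) \in \cX$ is $n$-truncated for every $n \geqslant 0$ — a reformulation of the description of the standard $t$-structure on $\Sh(\cX;\Sp)$, see \cite[\S1.3.2]{Lurie_SAG}. Finally, since $f^*$ is a left exact left adjoint between $\infty$-topoi it preserves $n$-truncated objects \cite[\S5.5.6]{HTT}; combining these facts, for $\cF \in \Sh(\cX;\Sp)_{\leqslant 0}$ and every $n \geqslant 0$ the object $\Omega^\infty\big((f^*_\Sp \cF)[n]\big) \simeq f^*\big(\Omega^\infty(\cF[n])\big)$ is $n$-truncated, whence $f^*_\Sp \cF \in \Sh(\cY;\Sp)_{\leqslant 0}$ and $f^*_\Sp$ is left $t$-exact.

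I expect the main obstacle to be the second assertion, and within it the two model-dependent points: identifying $f^*_\Sp = f^* \otimes \id_\Sp$ with the levelwise pushforward on $\Omega$-spectrum objects (this is exactly where left exactness of $f^*$ is used), and extracting the precise characterization of the coconnective sheaves of spectra from \cite{Lurie_SAG}. The first assertion follows directly from the bifunctoriality of the tensor product.
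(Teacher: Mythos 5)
Your proof is correct, but it takes a different route from the paper's in both halves. For right $t$-exactness the paper does not argue on connective objects at all: it identifies the right adjoint $f_*^{\cE}$ of $f^*_\cE$ with precomposition by $f_*$ under the identification $\cY \otimes \cE \simeq \FunR(\cY\op,\cE)$, observes that this functor visibly carries $\cY \otimes \cE_{\leqslant 0}$ into $\cX \otimes \cE_{\leqslant 0}$ (which, by \Cref{cor:standard_t_structure_fundamentals}-(1), are exactly the coconnective parts), and concludes by adjunction. Your argument instead works directly with the connective part $\ker(\tau_{\leqslant -1}^{\cX})$ and the interchange square coming from bifunctoriality of $\otimes$; this is equally valid --- the only point worth making explicit is that $f^* \otimes \id_{\cE_{\leqslant -1}}$ preserves zero objects, which holds because it preserves colimits and the relevant $\infty$-categories are pointed --- and your parenthetical generators-and-extensions variant is also fine. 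What the paper's approach buys is a one-line verification; what yours buys is independence from the explicit description of the coconnective part. For the second half the paper simply combines \Cref{eg:sheaves_of_spectra} with the citation \cite[Remark 1.3.2.8]{Lurie_SAG}; your argument is essentially a proof of that remark, identifying $f^*_{\Sp}$ with the levelwise functor on spectrum objects (this is where left exactness enters, since the limit defining $\Sp(\cX_\ast)$ is taken along $\Omega$) and then using the characterization of coconnective sheaves of spectra via truncatedness of $\Omega^{\infty}(\cF[n])$ together with the fact that left exact left adjoints between $\infty$-topoi preserve $n$-truncated objects. So you have unpacked the black box rather than invoked it; the mathematical content is the same and no step fails.
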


\begin{proof}
	Write $f_{*} \colon \cY \to \cX$ for the right adjoint to $ f^{*} $.
	Under the identifications
	\begin{equation*} 
		\cX \otimes \cE \simeq \FunR(\cX\op, \cE) \qquad \text{and} \qquad \cY \otimes \cE \simeq \FunR(\cY\op, \cE) \comma 
	\end{equation*}
	we see that the right adjoint $f_{*}^{\cE}$ to $ f^{*}_\cE$ is given by composition with $f_{*}$.
	It immediately follows that $f_{*}^{\cE}$ takes $\cY \otimes \cE_{\leqslant 0}$ to $\cX \otimes \cE_{\leqslant 0}$, and therefore that $f^{*}_\cE$ is right $t$-exact.
	The second half of the statement follows combining \cref{eg:sheaves_of_spectra} with \cite[Remark 1.3.2.8]{Lurie_SAG}.
\end{proof}


\section{Standard $t$-structures for $\infty$-topoi}

Let $\cE$ be a presentable stable $\infty$-category equipped with an accessible $t$-structure $\tau = (\cE_{\geqslant 0}, \cE_{\leqslant 0})$.
We now investigate the natural comparison functor
\begin{equation*}\label{eq:t_structure_comparison_map}
	i_{\geqslant 0}^{\cX} \colon \cX \otimes \cE_{\geqslant 0} \to (\cX \otimes \cE)_{\geqslant 0} \period
\end{equation*}
The main result of this section is that when $\cX$ is an $\infty$-topos and the $t$-structure $\tau$ is right complete, this functor is an equivalence (\Cref{cor:standard_t_structure_topos}).

We begin with the following general criterion, that holds without extra assumptions:

\begin{lem}\label{lem:tensor_product_connectives_extensions}
	Let $\cE$ be a presentable stable $\infty$-category equipped with an accessible $t$-structure $\tau = (\cE_{\geqslant 0}, \cE_{\leqslant 0})$ and let $ \cX $ be a presentable $ \infty $-category.
	Then the following conditions are equivalent:
	\begin{enumerate}\itemsep=0.2cm
		\item The functor $ i_{\geqslant 0}^{\cX} \colon \cX \otimes \cE_{\geqslant 0} \to \cX \otimes \cE $ is fully faithful and the essential image of $ i_{\geqslant 0}^{\cX} $ is closed under extensions.

		\item The functor $ i_{\geqslant 0}^{\cX} \colon \cX \otimes \cE_{\geqslant 0} \to (\cX \otimes \cE)_{\geqslant 0} $ is an equivalence.
		
		\item There exists an integer $ n \in \ZZ $ such that the $ \infty $-category $\cX \otimes \cE_{\geqslant n}$ is prestable and that
		\begin{equation*} 
			i_{\geqslant n}^\cX \colon \cX \otimes \cE_{\geqslant n} \to \cX \otimes \cE 
		\end{equation*}
		is fully faithful.
	\end{enumerate}
\end{lem}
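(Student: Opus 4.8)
The plan is to prove the cycle of implications $(2) \Rightarrow (1) \Rightarrow (3) \Rightarrow (2)$, using the general formalism of prestable \category ies and the already-established fact (\Cref{cor:standard_t_structure_fundamentals}) that $(\cX \otimes \cE)_{\geqslant 0}$ is the connective part of a $t$-structure on the stable \category $\cX \otimes \cE$.

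For $(2) \Rightarrow (1)$: if $i_{\geqslant 0}^\cX$ is an equivalence onto $(\cX \otimes \cE)_{\geqslant 0}$, then in particular it is fully faithful into $\cX \otimes \cE$, and its essential image is exactly the connective part of a $t$-structure, which by \Cref{lem:t_structure_standard_properties}-(1) (or simply because connective parts of $t$-structures always are) is closed under extensions.

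For $(1) \Rightarrow (3)$: take $n = 0$. The hypothesis says $i_{\geqslant 0}^\cX$ is fully faithful with essential image closed under extensions; since that essential image also contains every $X \otimes E$ with $E \in \cE_{\geqslant 0}$ and is closed under colimits (as $i_{\geqslant 0}^\cX$ is a left adjoint, being $\id_\cX \otimes i_{\geqslant 0}$... wait, $i_{\geqslant 0}$ is a right adjoint — so I should instead argue that $\cX \otimes \cE_{\geqslant 0}$ is closed under colimits in $\cX \otimes \cE$ using that $\cX \otimes (-)$ preserves the relevant colimit-preservation, i.e. use \Cref{lem:t_structure_standard_properties} and \Cref{cor:standard_t_structure_fundamentals}-(2) to identify the essential image with $(\cX \otimes \cE)_{\geqslant 0}$ directly, then note the latter is the connective part of a $t$-structure). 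Once we know the essential image is $(\cX \otimes \cE)_{\geqslant 0}$, the fully faithful functor $i_{\geqslant 0}^\cX$ exhibits $\cX \otimes \cE_{\geqslant 0}$ as the connective part of a $t$-structure on a stable \category, hence it is prestable, and $i_{\geqslant 0}^\cX$ factors through this connective part and is fully faithful into $\cX \otimes \cE$.

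For $(3) \Rightarrow (2)$: given $n$ with $\cX \otimes \cE_{\geqslant n}$ prestable and $i_{\geqslant n}^\cX$ fully faithful, I would first reduce to $n=0$ by the shift equivalence $\cE_{\geqslant n} \simeq \cE_{\geqslant 0}$ induced by $[-n]$, which is compatible with the inclusion into $\cE$ up to the automorphism $[-n]$ of $\cE$, hence tensoring with $\cX$ gives that $i_{\geqslant 0}^\cX$ is fully faithful and $\cX \otimes \cE_{\geqslant 0}$ is prestable too. Then the fully faithful functor $i_{\geqslant 0}^\cX \colon \cX \otimes \cE_{\geqslant 0} \hookrightarrow \cX \otimes \cE$ out of a prestable \category, landing inside the connective part $(\cX \otimes \cE)_{\geqslant 0}$ of the $t$-structure from \Cref{cor:standard_t_structure_fundamentals} (it lands there since the composite $\cX \otimes \cE_{\geqslant 0} \to \cX \otimes \cE \to \cX \otimes \cE_{\leqslant -1}$ is zero, as shown in that proof), has essential image closed under colimits and containing the generators $X \otimes E$, $E \in \cE_{\geqslant 0}$; since by \Cref{cor:standard_t_structure_fundamentals}-(2) these generate $(\cX \otimes \cE)_{\geqslant 0}$ under colimits and extensions, and the essential image of a fully faithful functor from a prestable category into the connective part of a $t$-structure is automatically closed under extensions (a fiber sequence with outer terms connective has connective middle term, and fully faithfulness plus prestability lets us detect the extension inside $\cX \otimes \cE_{\geqslant 0}$), we conclude the essential image is all of $(\cX \otimes \cE)_{\geqslant 0}$, so $i_{\geqslant 0}^\cX$ is an equivalence onto it.

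The main obstacle I anticipate is the argument that the essential image of a fully faithful functor \emph{out of a prestable \category} into $(\cX \otimes \cE)_{\geqslant 0}$ is closed under extensions: one must show that given a cofiber sequence $F' \to F \to F''$ in $\cX \otimes \cE$ with $F', F''$ in the essential image, the middle term $F$ (which lies in $(\cX \otimes \cE)_{\geqslant 0}$ by \Cref{lem:t_structure_standard_properties}-(1)) is again in the essential image. The point is that pulling $F' \to F \to F''$ back along the fully faithful embedding, one gets a cofiber sequence in the prestable \category $\cX \otimes \cE_{\geqslant 0}$ whose image recovers $F$; this uses that fully faithful functors between stable (or prestable) \category ies that preserve finite colimits reflect and create cofiber sequences among objects in the image, together with right-exactness of $i_{\geqslant 0}^\cX$ — which holds since $\id_\cX \otimes (-)$ preserves colimits and $i_{\geqslant 0}$, though a right adjoint, is right exact as the inclusion of the connective part of a $t$-structure. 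I would likely invoke \cite[Appendix C]{Lurie_SAG} on prestable \category ies for the clean statement, or \cite[Proposition 1.4.4.11]{Lurie_Higher_algebra} applied to the essential image.
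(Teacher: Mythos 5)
Your proposal is correct and follows essentially the same route as the paper: (1)$\Leftrightarrow$(2) via the definition of $(\cX \otimes \cE)_{\geqslant 0}$ together with \Cref{cor:standard_t_structure_fundamentals}-(2), and (3)$\Rightarrow$(2) by reducing to $n=0$ and showing that the essential image of $i_{\geqslant 0}^\cX$ is closed under extensions. The crux you flag is resolved in the paper exactly along the lines you sketch, with one precision worth noting: one cannot literally ``pull back the cofiber sequence $F' \to F \to F''$'' since $F$ is the unknown; instead one lifts the classifying map $\alpha \colon F'' \to F'[1]$ to a map $\beta$ in $\cX \otimes \cE_{\geqslant 0}$ via full faithfulness (using that $i_{\geqslant 0}^\cX$ preserves suspensions), sets $U = \fib(\beta)$, and uses prestability to see that the fiber square of $\beta$ is also a pushout, hence is carried by the colimit-preserving $i_{\geqslant 0}^\cX$ to a pushout in the stable $\infty$-category $\cX \otimes \cE$, i.e.\ to a fiber square exhibiting $i_{\geqslant 0}^\cX(U) \simeq \fib(\alpha) \simeq F$.
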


\begin{proof}
	The equivalence (1)$ \Leftrightarrow $(2) is immediate from the definition of the connective part $ (\cX \otimes \cE)_{\geqslant 0} $ of the standard $ t $-structure.
	The implication (2)$ \Rightarrow $(3) is clear. 

	To see that (3)$ \Rightarrow $(2), without loss of generality, we can suppose $n = 0$.
	In virtue of \cref{cor:standard_t_structure_fundamentals}-(2), we see that $(\cX \otimes \cE)_{\geqslant 0}$ is generated under colimits and extensions by the essential image of $ i_{\geqslant 0}^{\cX} $.
	Thus, to prove that the inclusion $(\cX \otimes \cE)_{\geqslant 0} \subseteq \cX \otimes \cE_{\geqslant 0}$ holds, it suffices to prove that the essential image of $i_{\geqslant 0}^\cX$ is closed under extensions.
	To see this, let $F' \to F \to F''$ be a fiber sequence in $\cX \otimes \cE$ and assume that $F'$ and $F''$ belong to the essential image of $i_{\geqslant 0}^\cX$ (and hence of $i_{\geqslant -1}^\cX$).
	Let $\alpha \colon F' \to F''[1]$ be the map classifying the given extension.
	Since $\cX \otimes \cE$ is stable, we can write
	\begin{equation*} 
		F \simeq \fib\big( F'' \stackrel{\alpha}{\to} F'[1] \big) \comma 
	\end{equation*}
	where $\alpha$ is a morphism in $\cX \otimes \cE$.
	By assumption, we can write
	\begin{equation*} 
		F' \simeq i_{\geqslant 0}^\cX(U') \qquad \text{and} \qquad F'' \simeq i_{\geqslant 0}^\cX(U'') \period 
	\end{equation*}
	Since $i_{\geqslant 0}^\cX$ commutes with colimits, it commutes in particular with suspensions, so that
	\begin{equation*} 
		F'[1] \simeq i_{\geqslant 0}^\cX(U'[1]) \comma 
	\end{equation*}
	where the suspension $U'[1]$ is computed in $\cX \otimes \cE_{\geqslant 0}$.
	The full faithfulness of $i_{\geqslant 0}^\cX$ guarantees that we can write $\alpha \simeq i_{\geqslant 0}^\cX(\beta)$, where $\beta \colon U'' \to U'[1]$ is a morphism in $\cX \otimes \cE_{\geqslant 0}$.
	Set
	\begin{equation*} 
		U \coloneqq \fib(\beta) \in \cX \otimes \cE_{\geqslant 0} \period 
	\end{equation*}
	Since this $\infty$-category is prestable by assumption, we deduce that the pullback diagram
	\begin{equation*} 
		\begin{tikzcd}
			U \arrow{r} \arrow{d} & U'' \arrow{d}{\beta} \\
			0 \arrow{r} & U'[1]
		\end{tikzcd} 
	\end{equation*}
	is also a pushout.
	In particular, it is taken to a pushout by $i_{\geqslant 0}^\cX$ and, since $\cX \otimes \cE$ is stable, we deduce that in fact
	\begin{equation*} 
		i_{\geqslant 0}^\cX( U ) \simeq \fib( i_{\geqslant 0}^\cX(\beta) ) \simeq \fib(\alpha) \simeq F \period 
	\end{equation*}
	Thus, $F$ belongs as well to the essential image of $i_{\geqslant 0}^\cX$, whence the conclusion.
\end{proof}

We now record an easy application of \Cref{lem:tensor_product_connectives_extensions}.
For this, the reader may wish to review the definition of a \textit{projectively generated} presentable $ \infty $-category in \cite[Definition 5.5.8.23]{HTT} or \cite[Recollection 2.4]{arXiv:2108.03545}.

\begin{cor}\label{cor:tensor_product_connectives_for_projectively_generated}
	Let $\cE$ be a presentable stable $\infty$-category equipped with an accessible $t$-structure $ (\cE_{\geqslant 0}, \cE_{\leqslant 0}) $, and let $ \cX $ be a projectively generated presentable $ \infty $-category.
	Then the functor
	\begin{equation*}
		i_{\geqslant 0}^{\cX} \colon \cX \otimes \cE_{\geqslant 0} \to (\cX \otimes \cE)_{\geqslant 0}
	\end{equation*}
	is an equivalence.
\end{cor}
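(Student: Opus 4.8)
The plan is to reduce to a presheaf-type category by using the defining presentation of a projectively generated $\infty$-category, and then to verify condition~(3) of \cref{lem:tensor_product_connectives_extensions}. Since $\cX$ is projectively generated, we may choose a small $\infty$-category $\cC$ admitting finite coproducts together with an equivalence $\cX \simeq \mathcal{P}_\Sigma(\cC)$; concretely, one takes $\cC$ to be the full subcategory of $\cX$ spanned by the compact projective objects, which is closed under finite coproducts. The crucial input is the identification, natural in the presentable $\infty$-category $\cD$,
\[
	\mathcal{P}_\Sigma(\cC) \otimes \cD \;\simeq\; \Fun^{\times}(\cC\op, \cD) \comma
\]
where $\Fun^{\times}(\cC\op,\cD) \subseteq \Fun(\cC\op,\cD)$ is the full subcategory of finite-product-preserving functors, and where moreover this equivalence is compatible with the inclusion $\mathcal{P}_\Sigma(\cC) \hookrightarrow \PSh(\cC)$ --- which, under $\PSh(\cC) \otimes \cD \simeq \Fun(\cC\op,\cD)$, corresponds to the inclusion $\Fun^{\times}(\cC\op,\cD) \hookrightarrow \Fun(\cC\op,\cD)$. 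I would justify this by recalling that $\mathcal{P}_\Sigma(\cC)$ is the reflective localization of $\PSh(\cC)$ at the maps $\coprod_i j(c_i) \to j\big(\coprod_i c_i\big)$ indexed by finite families of objects of $\cC$, with $j$ the Yoneda embedding, and then combining \cref{lem:tensor_product_localization} with an identification of the local objects of $\Fun(\cC\op,\cD)$ (this is standard).

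Granting this, under the identifications $\cX \otimes \cE_{\geqslant 0} \simeq \Fun^{\times}(\cC\op, \cE_{\geqslant 0})$ and $\cX \otimes \cE \simeq \Fun^{\times}(\cC\op, \cE)$ the functor $i_{\geqslant 0}^{\cX}$ becomes postcomposition along the fully faithful inclusion $i_{\geqslant 0} \colon \cE_{\geqslant 0} \hookrightarrow \cE$, and postcomposition along a fully faithful functor is again fully faithful, with essential image exactly the product-preserving functors that land in $\cE_{\geqslant 0}$. It then remains to check that $\cX \otimes \cE_{\geqslant 0} \simeq \Fun^{\times}(\cC\op, \cE_{\geqslant 0})$ is prestable; once this is done, \cref{lem:tensor_product_connectives_extensions}-(3) applies with $n = 0$ and we are finished. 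For prestability I would first note that $\Fun^{\times}(\cC\op,\cE)$ is closed under finite limits and finite colimits in $\Fun(\cC\op,\cE)$ (computed pointwise, using that in a stable $\infty$-category finite products commute with finite limits and with finite colimits), hence is stable, and then show that the pointwise $t$-structure on $\Fun(\cC\op,\cE)$ restricts to it: both truncation functors $\tau_{\geqslant 0}$ and $\tau_{\leqslant -1}$ on $\cE$ preserve finite products --- $\tau_{\geqslant 0}$ as a right adjoint and $\tau_{\leqslant -1}$ as a left adjoint, using that $\cE_{\geqslant 0}$ and $\cE_{\leqslant -1}$ are closed respectively under finite coproducts and finite products in the stable $\infty$-category $\cE$ --- so applying them pointwise preserves finite-product-preserving functors. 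The connective part of the resulting $t$-structure on $\Fun^{\times}(\cC\op,\cE)$ is precisely $\Fun^{\times}(\cC\op,\cE_{\geqslant 0})$, which is therefore prestable. (Incidentally this gives a second, more direct route: the coconnective part of this $t$-structure is $\Fun^{\times}(\cC\op,\cE_{\leqslant 0}) = \cX \otimes \cE_{\leqslant 0}$, so by the uniqueness statement in \cref{cor:standard_t_structure_fundamentals} it is the standard $t$-structure, whence $(\cX \otimes \cE)_{\geqslant 0} = \Fun^{\times}(\cC\op,\cE_{\geqslant 0})$ directly.)

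The main obstacle is the first paragraph: establishing the natural equivalence $\mathcal{P}_\Sigma(\cC) \otimes \cD \simeq \Fun^{\times}(\cC\op,\cD)$ with enough functoriality to recognize $i_{\geqslant 0}^{\cX}$ as a postcomposition functor; everything afterwards is formal. It is worth emphasizing where the projective-generation hypothesis is genuinely used: for an arbitrary presentable $\cX$, realized as an arbitrary (not left exact) localization of a presheaf category, $\cX \otimes \cE_{\geqslant 0}$ need not be prestable, and it is exactly the compatibility of $\mathcal{P}_\Sigma$-localizations with the pointwise $t$-structure --- equivalently, the fact that finite-product-preserving functors are stable under pointwise truncation --- that makes the argument go through in the projectively generated case.
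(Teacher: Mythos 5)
Your proposal is correct and takes essentially the same route as the paper: both identify $\cX \otimes \cD \simeq \Fun^{\times}(\cX_0\op,\cD)$ for the subcategory $\cX_0$ of compact projective generators (the paper simply cites this identification rather than re-deriving it from \cref{lem:tensor_product_localization}) and both recognize $i_{\geqslant 0}^{\cX}$ as postcomposition with $i_{\geqslant 0}$. The only divergence is at the end: the paper invokes clause~(1) of \cref{lem:tensor_product_connectives_extensions}, noting that full faithfulness and closure of the essential image under extensions are immediate pointwise, whereas you verify prestability of $\Fun^{\times}(\cX_0\op,\cE_{\geqslant 0})$ in order to apply clause~(3); your extra check is correct but not needed.
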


\begin{proof}
	Write $ \cX_{0} \subset \cX $ for the full subcategory spanned by the compact projective objects.
	Then we have identifications
	\begin{equation*}
		\cX \otimes \cE_{\geqslant 0} \simeq \Fun^{\times}(\cX_{0}\op,\cE_{\geqslant 0}) \qquad \text{and} \qquad \cX \otimes \cE \simeq \Fun^{\times}(\cX_{0}\op,\cE) \comma
	\end{equation*}
	where $ \Fun^{\times}(\cX_{0}\op,\cD) $ denotes the full subcategory of $ \Fun(\cX_{0}\op,\cD) $ spanned by the functors that preserve finite products.
	Moreover, since $ i_{\geqslant 0} \colon \cE_{\geqslant 0} \hookrightarrow \cE $ preserves finite products,  under these identifications, the functor $ i_{\geqslant 0}^{\cX} = \id_{\cX} \otimes i_{\geqslant 0} $ is given by postcomposition with $ i_{\geqslant 0} $.
	See \cite[Variant 2.10]{arXiv:2108.03545}.

	In particular, since $ i_{\geqslant 0} $ is fully faithful with essential image closed under extensions, we deduce that $ i_{\geqslant 0}^{\cX} \colon \cX \otimes \cE_{\geqslant 0} \to \cX \otimes \cE $ is fully faithful and the essential image closed under extensions.
	\Cref{lem:tensor_product_connectives_extensions} completes the proof.
\end{proof}


\subsection{An unstable statement}

Let $\cX$ be an $\infty$-topos.
We write $\mathbf{1}_\cX$ for the final object of $\cX$ and we write
\begin{equation*} 
	\cX_\ast \coloneqq \cX_{\mathbf{1}_\cX /} 
\end{equation*}
for the $\infty$-category of pointed objects of $\cX$.
For an integer $n \geqslant -2$, we consider the $\infty$-category
\begin{equation*} 
	\cX_{\ast}^{\leqslant n} \coloneqq ( \cX_\ast )_{\leqslant n} 
\end{equation*}
of $n$-truncated objects in $\cX_\ast$.
Unraveling the definitions, we see that the inclusion $\cX_\ast^{\leqslant n} \subseteq \cX_\ast$ has a left adjoint that sends a pointed object $(X,x)$ to the pointed object $(\tau_{\leqslant n}(X), x')$, where $x'$ is the composite
\begin{equation*} 
	\begin{tikzcd}[column sep=small]
		\mathbf{1}_\cX \arrow{r}{x} & X \arrow{r} & \tau_{\leqslant n} X \period
	\end{tikzcd} 
\end{equation*}
We still denote this left adjoint by
\begin{equation*} 
	\tau_{\leqslant n} \colon \cX_\ast \to \cX_\ast^{\leqslant n} \period 
\end{equation*}
For $k \geqslant -1$, we define $\cX_\ast^{\geqslant k}$ as the fiber product
\begin{equation*} 
	\begin{tikzcd}
		\cX_\ast^{\geqslant k} \arrow{r} \arrow{d} & \cX_\ast \arrow{d}{\tau_{\leqslant k-1}} \\
		\ast \arrow[r, "\mathbf{1}_\cX"'] & \cX_\ast^{\leqslant k-1} \period
	\end{tikzcd} 
\end{equation*}
The functoriality of the tensor product of $\infty$-categories immediately yields the following commutative diagram:
\begin{equation*} 
	\begin{tikzcd}[column sep=5em]
		\cX \otimes \cS_\ast^{\geqslant k} \arrow{r} \arrow[d, "\alpha_k"'] & \cX \otimes \cS_\ast \arrow{d} \arrow{r}{\id_\cX \otimes \tau_{\leqslant k-1}} & \cX \otimes \cS_\ast^{\leqslant k-1} \arrow{d} \\
		\cX_\ast^{\geqslant k} \arrow{r} & \cX_\ast \arrow[r, "\tau_{\leqslant k-1}^\cX"'] & \cX_\ast^{\leqslant k} \period
	\end{tikzcd} 
\end{equation*}
The central and the right vertical functors are equivalences (see \cite[Examples 4.8.1.21 \& 4.8.1.22]{Lurie_Higher_algebra}), and they would be even if $\cX$ were simply a presentable $\infty$-category.
Since $\cX$ is an $\infty$-topos, we furthermore see:

\begin{prop}\label{prop:coconnectives_topos_tensor_product}
	Let $ \cX $ be an $ \infty $-topos.
	Then for each integer $k > 0$, the comparison functor
	\begin{equation*}
		\alpha_k \colon \cX \otimes \cS_\ast^{\geqslant k} \to \cX_\ast^{\geqslant k}
	\end{equation*}
	is an equivalence.
\end{prop}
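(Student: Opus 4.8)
The plan is to prove by induction on $k\geqslant 1$ that the functor
\[
\beta_k \coloneqq \id_{\cX}\otimes\,\iota_k \colon \cX\otimes\cS_\ast^{\geqslant k}\longrightarrow \cX\otimes\cS_\ast \simeq \cX_\ast ,
\]
with $\iota_k\colon\cS_\ast^{\geqslant k}\hookrightarrow\cS_\ast$ the inclusion, is fully faithful; this is equivalent to the assertion that $\alpha_k$ is an equivalence. Indeed, $\cX\otimes\cS_\ast^{\geqslant k}$ is generated under colimits by the elementary tensors $U\otimes S^k$, $U\in\cX$ (because the $k$-sphere $S^k$ generates $\cS_\ast^{\geqslant k}$ under colimits, since $\Map_{\cS_\ast^{\geqslant k}}(S^k,-)=\Omega^k$ is conservative), and $\beta_k$ carries these to the $k$-connective objects $\Sigma^k(U\sqcup\mathbf{1}_\cX)$. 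As $\cX_\ast^{\geqslant k}$ is coreflective in $\cX_\ast$, with coreflector the $k$-connective cover, it is closed under colimits, so $\beta_k$ factors through $j_k\colon\cX_\ast^{\geqslant k}\hookrightarrow\cX_\ast$; by the construction of $\alpha_k$ and uniqueness of factorizations through a fully faithful functor, this factorization is $\alpha_k$. Once $\beta_k$ is fully faithful, its essential image is closed under colimits, hence equals the colimit closure of the $\Sigma^k(U\sqcup\mathbf{1}_\cX)$, and this colimit closure is all of $\cX_\ast^{\geqslant k}$ because $\Omega^k\colon\cX_\ast^{\geqslant k}\to\cX$ is conservative — which holds in any $\infty$-topos, since $\Omega$ induces equivalences $\cX_\ast^{\geqslant j}\simeq\mathrm{Grp}\bigl(\cX_\ast^{\geqslant j-1}\bigr)$ (cf.\ \cite{HTT}) and the forgetful functor of group objects is conservative.

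The induction rests on two compatibilities of $\cX\otimes(-)$ with group objects. First, $\mathrm{Grp}(\cD)\simeq\mathrm{Grp}(\cS)\otimes\cD$ naturally in $\cD\in\PrL$: this is because the inclusions $\mathrm{Grp}(\cS)\subseteq\Mon(\cS)\subseteq\PSh(\Delta)$ are localizations that are compatible with $-\otimes\cD$ (the relevant localizing maps being between compact objects); consequently $\cX\otimes\mathrm{Grp}(f)\simeq\mathrm{Grp}(\id_\cX\otimes f)$ for every $f$ in $\PrL$. Second, the delooping functor $B\colon\mathrm{Grp}(\cS)\to\cS_\ast$ lies in $\PrL$ (being left adjoint to $\Omega$), and under $\cX\otimes\mathrm{Grp}(\cS)\simeq\mathrm{Grp}(\cX)$ the functor $\id_\cX\otimes B$ is identified with the delooping functor $B_\cX\colon\mathrm{Grp}(\cX)\to\cX_\ast$ of the $\infty$-topos $\cX$ — both being the colimit-preserving, $\cX$-linear extension of $B$ along the unit $\cS\to\cX$. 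Note $B_\cX$ is fully faithful, since $\Omega_\cX B_\cX\simeq\id$.

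For the base case $k=1$, the equivalence $\mathrm{Grp}(\cS)\simeq\cS_\ast^{\geqslant 1}$ (delooping) carries $B$ to $\iota_1$, so $\beta_1$ is identified with $B_\cX$ and is fully faithful. For the inductive step, using $\cS_\ast^{\geqslant k}\simeq\mathrm{Grp}\bigl(\cS_\ast^{\geqslant k-1}\bigr)$ and the fact that delooping commutes with the finite-product-preserving inclusion $\iota_{k-1}$, we have $\iota_k\simeq B\circ\mathrm{Grp}(\iota_{k-1})$ after these identifications. Tensoring with $\cX$ and applying the two facts above, together with the inductive hypothesis — which identifies $\cX\otimes\cS_\ast^{\geqslant k-1}$ with $\cX_\ast^{\geqslant k-1}$ and $\beta_{k-1}$ with $j_{k-1}$ — identifies $\beta_k$ with the composite
\[
\mathrm{Grp}\bigl(\cX_\ast^{\geqslant k-1}\bigr)\xrightarrow{\ \mathrm{Grp}(j_{k-1})\ }\mathrm{Grp}(\cX)\xrightarrow{\ B_\cX\ }\cX_\ast .
\]
Both $\mathrm{Grp}(j_{k-1})$ (applying $\mathrm{Grp}(-)$ to a fully faithful functor) and $B_\cX$ are fully faithful, hence so is $\beta_k$.

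The main obstacle is the coherence bookkeeping in the inductive step: one must check that the identifications of $\cX\otimes(-)$ with $\mathrm{Grp}(-)$, with the delooping functors, and with the inclusions $\iota_\bullet$ and $j_\bullet$ are mutually compatible, so that the equivalence produced at the end genuinely is the canonical comparison $\alpha_k$ rather than merely an abstract equivalence $\cX\otimes\cS_\ast^{\geqslant k}\simeq\cX_\ast^{\geqslant k}$. The two load-bearing lemmas are the naturality of $\mathrm{Grp}(\cD)\simeq\mathrm{Grp}(\cS)\otimes\cD$ and the identification $\id_\cX\otimes B\simeq B_\cX$.
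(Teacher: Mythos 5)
Your argument is correct in substance, and it rests on the same underlying recognition principle as the paper's proof, but the execution is genuinely different. The paper deloops all $k$ times at once: it invokes the iterated bar--cobar adjunction and the equivalence $\Mon^{\gp}_{\EE_k}(\cX)\simeq\cX_\ast^{\geqslant k}$ of \cite[Theorem 5.2.6.15]{Lurie_Higher_algebra}, identifies $\cX\otimes\Mon^{\gp}_{\EE_k}(\cS)\simeq\Mon^{\gp}_{\EE_k}(\cX)$ directly via the $\FunR(\cX\op,-)$ model of the tensor product (grouplike $\EE_k$-monoids being cut out by limit conditions in a functor category), and then checks that $\id_\cX\otimes\Bar^{(k)}$ agrees with $\Bar^{(k)}_\cX$ by restricting to elementary tensors $X\otimes(-)$ and citing \cite[Example 5.2.3.11]{Lurie_Higher_algebra}. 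You instead induct on $k$ using single deloopings $\mathrm{Grp}(-)$, which buys a more elementary recognition statement at each stage at the cost of the coherence bookkeeping you yourself flag, and you justify $\mathrm{Grp}(\cD)\simeq\mathrm{Grp}(\cS)\otimes\cD$ by a localization-of-$\PSh(\Delta)$ argument rather than the $\FunR$ identification (both work; the latter is shorter). Your one unproved load-bearing claim, $\id_\cX\otimes B\simeq B_\cX$, is precisely the point the paper does not take for granted: it is discharged by observing that both functors preserve colimits, so it suffices to compare them after composing with the universal functor $\cX\times\mathrm{Grp}(\cS)\to\cX\otimes\mathrm{Grp}(\cS)$, where the identification $X\otimes B(-)\simeq B_\cX(X\otimes-)$ follows from \cite[Example 5.2.3.11]{Lurie_Higher_algebra}; you should make this step explicit. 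Finally, your separate essential-surjectivity argument (colimit generation by $U\otimes S^k$ plus conservativity of $\Omega^k$ on $\cX_\ast^{\geqslant k}$) is correct but redundant given your inductive step, since $B_\cX\circ\mathrm{Grp}(j_{k-1})$ already has essential image exactly $\cX_\ast^{\geqslant k}$; the paper gets surjectivity for free from the bar--cobar equivalence.
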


\begin{proof}
	Recall from \cite[Notation 5.2.6.11]{Lurie_Higher_algebra} the iterated bar-cobar adjunction
	\begin{equation*} 
		\Bar^{(k)}_\cX \colon \Mon_{\EE_k}(\cX) \leftrightarrows \cX_\ast \colon \CoBar_\cX^{(k)} \period 
	\end{equation*}
	(For $ X \in \cX_{\ast} $, the underlying object of $ \CoBar_\cX^{(k)}(X) $ is just the $ k $-fold based loop object $ \Omega^k X $.)
	By \cite[Theorem 5.2.6.15]{Lurie_Higher_algebra}, this adjunction restricts to an equivalence 
	\begin{equation*} 
		\Bar^{(k)}_\cX \colon \Mon_{\EE_k}^{\gp}(\cX) \leftrightarrows \cX_\ast^{\geqslant k} \colon \CoBar_\cX^{(k)} \period 
	\end{equation*}
	Notice that
	\begin{align*}
		\cX \otimes \Mon_{\EE_k}^{\gp}(\cS) &\simeq \FunR( \cX\op, \Mon_{\EE_k}^{\gp}(\cS) ) \\
		&\simeq \Mon_{\EE_k}^{\gp}(\FunR(\cX\op, \cS)) \\
		&\simeq \Mon_{\EE_k}^{\gp}(\cX) \period
	\end{align*}
	Hence it suffices to argue that the diagram
	\begin{equation*} 
		\begin{tikzcd}[sep=3em]
			\cX \otimes \Mon_{\EE_k}^{\gp}( \cS ) \arrow[r, "\sim"{yshift=-0.25ex}] \arrow[d, "\id_\cX \otimes \Bar^{(k)}"'] & \Mon_{\EE_k}^{\gp}(\cX) \arrow{d}{\Bar_\cX^{(k)}} \\
			\cX \otimes \cS^{\geqslant k}_\ast \arrow[r, "\alpha_k"'] & \cX_\ast^{\geqslant k}
		\end{tikzcd} 
	\end{equation*}
	commutes.
	(Here, $\Bar^{(k)}$ denotes the iterated bar construction for $ \cS$.)
	Since all functors commute with colimits, it suffices to check that the diagram commutes after composition with the universal functor
	\begin{equation*} 
		\cX \times \Mon_{\EE_k}^{\gp}(\cS) \to \cX \otimes \Mon_{\EE_k}^{\gp}(\cS)  
	\end{equation*}
	that preserves colimits separately in each variable.
	For this, it is enough to observe that given $X \in \cX$, the functor
	\begin{equation*} 
		X \otimes (-) \colon \cS \to \cX 
	\end{equation*}
	commutes with colimits and therefore \cite[Example 5.2.3.11]{Lurie_Higher_algebra} supplies a canonical identification
	\begin{equation*} 
		X \otimes \Bar^{(k)}(-) \simeq \Bar^{(k)}_\cX(X \otimes -) \comma 
	\end{equation*}
	which is functorial in $X$.
	The conclusion follows.
\end{proof}

\begin{cor}\label{cor:prestability_sheaves_of_spectra}
	Let $\cX$ be an $\infty$-topos.
	Then the natural functor
	\begin{equation*} 
		\cX \otimes \Sp_{\geqslant 0} \to \cX \otimes \Sp 
	\end{equation*}
	is fully faithful.
\end{cor}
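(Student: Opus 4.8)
The plan is to realize the comparison functor $\id_\cX \otimes i_{\geqslant 0} \colon \cX \otimes \Sp_{\geqslant 0} \to \cX \otimes \Sp$ as a colimit in $\PrL$ of a morphism of towers whose components are the fully faithful inclusions $\cX_\ast^{\geqslant k} \hookrightarrow \cX_\ast$ produced by \cref{prop:coconnectives_topos_tensor_product}, and then to read off full faithfulness from the fact that mapping spaces in a limit of \infcats are computed as limits of mapping spaces.

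First I would recall the standard presentation of connective and of all spectra as sequential limits of pointed spaces: one has $\Sp \simeq \lim\!\bigl( \cdots \xrightarrow{\ \Omega\ } \cS_\ast \xrightarrow{\ \Omega\ } \cS_\ast \bigr)$, while $\Sp_{\geqslant 0}$ identifies with the full subcategory $\lim\!\bigl( \cdots \xrightarrow{\ \Omega\ } \cS_\ast^{\geqslant 1} \xrightarrow{\ \Omega\ } \cS_\ast^{\geqslant 0} \bigr)$, where $\cS_\ast^{\geqslant 0} = \cS_\ast$ and the inclusion $i_{\geqslant 0} \colon \Sp_{\geqslant 0} \hookrightarrow \Sp$ is induced levelwise by the inclusions $\cS_\ast^{\geqslant k} \hookrightarrow \cS_\ast$; indeed, a spectrum $E$ is connective precisely when each delooping $\Omega^\infty(E[k])$ is $k$-connective. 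Since truncation functors preserve colimits, each $\cS_\ast^{\geqslant k} \hookrightarrow \cS_\ast$ is a colimit-preserving fully faithful functor between presentable \infcats, and the transition maps $\Omega$ are right adjoint to the suspension functors $\Sigma \colon \cS_\ast^{\geqslant k} \to \cS_\ast^{\geqslant k+1}$. Hence $(\cS_\ast^{\geqslant k}, \Sigma)_{k \geqslant 0}$ and $(\cS_\ast, \Sigma)_{k \geqslant 0}$ are towers in $\PrL$ with colimits $\Sp_{\geqslant 0}$ and $\Sp$ respectively --- here using that a colimit in $\PrL$ of such a tower is the limit of the diagram of right adjoints, computed in $\widehat\Cat_\infty$ (\cite[Proposition 5.5.3.13 \& Theorem 5.5.3.18]{HTT}) --- and $i_{\geqslant 0}$ is the map induced by the levelwise inclusions.

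Next I would apply $\cX \otimes (-) \colon \PrL \to \PrL$, which preserves colimits (see \cite[\S4.8.1]{Lurie_Higher_algebra}), to these presentations. Using the equivalence $\cX \otimes \cS_\ast \simeq \cX_\ast$ of \cite[Examples 4.8.1.21 \& 4.8.1.22]{Lurie_Higher_algebra} together with the equivalences $\alpha_k \colon \cX \otimes \cS_\ast^{\geqslant k} \xrightarrow{\ \sim\ } \cX_\ast^{\geqslant k}$ of \cref{prop:coconnectives_topos_tensor_product} (for $k = 0$ this is tautological) --- which are compatible with the inclusions, by the commutative diagram preceding \cref{prop:coconnectives_topos_tensor_product}, and with the suspension functors, as one sees after reducing to the compatibility of $\cX \otimes \cS_\ast \simeq \cX_\ast$ with suspension --- one identifies $\id_\cX \otimes i_{\geqslant 0}$ with the colimit in $\PrL$ of the morphism of towers $(\cX_\ast^{\geqslant k}, \Sigma)_k \to (\cX_\ast, \Sigma)_k$ given levelwise by the inclusions $\cX_\ast^{\geqslant k} \hookrightarrow \cX_\ast$. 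On underlying \infcats this colimit is again the limit of the diagram of right adjoints; since looping commutes with the inclusions $\cX_\ast^{\geqslant k} \hookrightarrow \cX_\ast$ --- so that the relevant squares are right adjointable --- it is computed as the functor $\lim_k (\cX_\ast^{\geqslant k}, \Omega) \to \lim_k (\cX_\ast, \Omega)$ induced levelwise by those inclusions. As each $\cX_\ast^{\geqslant k} \hookrightarrow \cX_\ast$ is fully faithful and mapping spaces in a limit of \infcats are the limits of the mapping spaces in the terms, this functor is fully faithful; therefore so is $\cX \otimes \Sp_{\geqslant 0} \to \cX \otimes \Sp$.

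All of the substance involving the \inftopos hypothesis is already contained in \cref{prop:coconnectives_topos_tensor_product}; the remaining work is bookkeeping --- arranging the sequential-limit presentations of $\Sp_{\geqslant 0}$ and $\Sp$ so that $i_{\geqslant 0}$ is manifestly the levelwise inclusion of towers, and checking that the $\alpha_k$ intertwine both the suspension/loop structure maps and the inclusions so that this identification survives the passage through $\cX \otimes (-)$ --- after which full faithfulness is formal. This last compatibility check is the step I expect to be most delicate to write out carefully.
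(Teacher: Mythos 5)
Your proposal is correct and follows essentially the same route as the paper: both present $\Sp_{\geqslant 0}\hookrightarrow\Sp$ as a natural transformation of sequential $\Omega$-towers of pointed spaces, observe that $\cX\otimes(-)$ preserves these limits (the paper phrases this via limits in $\PrR$ and \cite[Remark 4.8.1.24]{Lurie_Higher_algebra}, you via colimits of the left-adjoint towers in $\PrL$, which is the same statement by \cite[Proposition 5.5.3.13 \& Theorem 5.5.3.18]{HTT}), and then deduce full faithfulness levelwise from \Cref{prop:coconnectives_topos_tensor_product}. The only difference is that you spell out the compatibility of the $\alpha_k$ with the structure maps, which the paper leaves implicit.
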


\begin{proof}
	Recall from \cite[Remark 5.2.6.26]{Lurie_Higher_algebra} that one has
	\begin{equation*} 
		\begin{tikzcd}[column sep=small]
			\Sp_{\geqslant 0} \simeq \lim \big( \cdots \arrow{r}{\Omega} & \cS_\ast^{\geqslant n+1} \arrow{r}{\Omega} & \cS_{\ast}^{\geqslant n} \arrow{r}{\Omega} & \cdots \arrow{r}{\Omega} & \cS_\ast^{\geqslant 1} \big) \period
		\end{tikzcd} 
	\end{equation*}
	Similarly,
	\begin{equation*} 
		\begin{tikzcd}[column sep=small]
			\Sp \simeq \lim \big( \cdots \arrow{r}{\Omega} & \cS_\ast \arrow{r}{\Omega} & \cS_{\ast} \arrow{r}{\Omega} & \cdots \arrow{r}{\Omega} & \cS_\ast \big) \period
		\end{tikzcd} 
	\end{equation*}
	Moreover, the inclusion $ \Sp_{\geqslant 0} \hookrightarrow \Sp$ is induced by the fully faithful inclusions $\cS_\ast^{\geqslant n} \hookrightarrow \cS_\ast$, which assemble into a natural transformation of the above limit diagrams.
	Notice that both limits are taken in $\PrR$ and therefore they are preserved by the functor $\cX \otimes (-) $ (see \cite[Remark 4.8.1.24]{Lurie_Higher_algebra}).
	Thus, the claim follows at once from \cref{prop:coconnectives_topos_tensor_product}.
\end{proof}

\begin{rem}
	Let $(\cC, \tau)$ be an $\infty$-site and assume that $\cX \simeq \Sh(\cC,\tau)$.
	The functoriality of the tensor product in $\PrL$ immediately implies that the diagram
	\begin{equation*} 
		\begin{tikzcd}
			\PSh(\cC) \otimes \Sp_{\geqslant 0} \arrow{r} \arrow{d} & \PSh(\cC) \otimes \Sp \arrow{d} \\
			\Sh(\cC,\tau) \otimes \Sp_{\geqslant 0} \arrow{r} & \Sh(\cC,\tau) \otimes \Sp
		\end{tikzcd} 
	\end{equation*}
	commutes, where the vertical arrows are the sheafification functors.
	Moreover, the formula for the sheafification provided in the proof of \cite[Proposition 6.2.2.7]{HTT} (which holds with coefficients in any presentable $\infty$-category) shows that this square is horizontally right adjointable.
	In particular, one can deduce the full faithfulness provided by \cref{cor:prestability_sheaves_of_spectra} for $ \cX = \Sh(\cC,\tau)$ directly from the one for $ \cX = \PSh(\cC)$, which is straightforward since $\PSh(\cC) \otimes (-) \simeq \Fun(\cC\op, -)$ preserves fully faithful left adjoints.
\end{rem}


\subsection{Proof of the main theorem}

\begin{recollection}\label{recollection:reduction_to_spectra}
	Let $\cY$ be a presentable $\infty$-category and let $\cD$ be a presentable prestable $\infty$-category.
	Then combining \cite[Example C.1.5.6 and Theorem C.4.1.1]{Lurie_SAG} we deduce that
	\begin{align*} 
		\cY \otimes \cD &\simeq \cY \otimes (\Sp_{\geqslant 0} \otimes \cD) \\
		&\simeq (\cY \otimes \Sp_{\geqslant 0}) \otimes \cD \period 
	\end{align*}
	Similarly, if $\cE$ is a presentable stable $\infty$-category, \cite[Example 4.8.1.23]{Lurie_Higher_algebra} shows that
	\begin{align*} 
		\cY \otimes \cE &\simeq \cY \otimes (\Sp \otimes \cE) \\
		&\simeq (\cY \otimes \Sp) \otimes \cE \\
		&\simeq \Stab(\cY) \otimes \cE \period
	\end{align*}
\end{recollection}

\begin{cor}\label{cor:prestability}
	Let $\cX$ be an $\infty$-topos and let $\cD$ be a Grothendieck prestable $\infty$-category.
	Then $\cX \otimes \cD$ is again a Grothendieck prestable $\infty$-category.
\end{cor}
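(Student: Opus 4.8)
The plan is to bootstrap from the case $\cD = \Sp_{\geqslant 0}$ and then to invoke the stability of Grothendieck prestable $\infty$-categories under the tensor product in $\PrL$. First I would apply \cref{recollection:reduction_to_spectra}: since $\cD$ is prestable, there is a canonical equivalence $\cX \otimes \cD \simeq (\cX \otimes \Sp_{\geqslant 0}) \otimes \cD$. It therefore suffices to establish two facts: (i) that $\cX \otimes \Sp_{\geqslant 0}$ is Grothendieck prestable, and (ii) that the tensor product, formed in $\PrL$, of two Grothendieck prestable $\infty$-categories is again Grothendieck prestable.

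For (i), note first that $\cX \otimes \Sp_{\geqslant 0}$ is prestable, being a module over the idempotent object $\Sp_{\geqslant 0} \in \PrL$ (cf. \cref{recollection:reduction_to_spectra}). Since $i_{\geqslant 0}^{\cX} \colon \cX \otimes \Sp_{\geqslant 0} \to \cX \otimes \Sp$ is moreover fully faithful by \cref{cor:prestability_sheaves_of_spectra}, condition~(3) of \cref{lem:tensor_product_connectives_extensions} holds with $\cE = \Sp$ and $n = 0$, so that $i_{\geqslant 0}^{\cX}$ identifies $\cX \otimes \Sp_{\geqslant 0}$ with the connective part $(\cX \otimes \Sp)_{\geqslant 0}$. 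By \cref{eg:sheaves_of_spectra}, the standard $t$-structure on $\cX \otimes \Sp \simeq \Sh(\cX;\Sp)$ is compatible with filtered colimits, and for any accessible $t$-structure compatible with filtered colimits on a presentable stable $\infty$-category the connective part is Grothendieck prestable: this is the bookkeeping already used in the proof of \cref{lem:t_structure_standard_properties} together with the fact that filtered colimits commute with finite limits in a stable $\infty$-category, and it is recorded in \cite[\S C.2]{Lurie_SAG}. Hence $\cX \otimes \Sp_{\geqslant 0}$ is Grothendieck prestable. For (ii), I would cite \cite[\S C.4]{Lurie_SAG}, where it is shown that the full subcategory of $\PrL$ spanned by the Grothendieck prestable $\infty$-categories is closed under tensor products.

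I expect (ii) to be the crux, since colimits in a tensor product in $\PrL$ are not computed termwise and so the left exactness of filtered colimits in $(\cX \otimes \Sp_{\geqslant 0}) \otimes \cD$ is not formal. If a sufficiently clean citation for (ii) were unavailable, my fallback would be to argue directly, using that $\cX$ is an $\infty$-topos: choose a presentation of $\cX$ as a left exact accessible localization of a presheaf $\infty$-topos $\PSh(\cC)$, so that $\cX \otimes \cD$ becomes the corresponding localization of $\PSh(\cC) \otimes \cD \simeq \Fun(\cC\op, \cD)$. The latter is Grothendieck prestable, since all limits and colimits in a functor category are computed pointwise and $\cD$ is Grothendieck prestable; and the corresponding sheafification functor with coefficients in $\cD$ is given by the same iterated ``plus construction'' as ordinary sheafification (as in the remark following \cref{cor:prestability_sheaves_of_spectra}), so the usual proof that sheafification is left exact applies verbatim, its only relevant input being the left exactness of filtered colimits in the coefficient category. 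Therefore $\cX \otimes \cD$ is a left exact accessible localization of a Grothendieck prestable $\infty$-category, hence itself Grothendieck prestable.
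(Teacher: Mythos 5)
Your proposal is correct and follows essentially the same route as the paper: reduce via \cref{recollection:reduction_to_spectra} to the case $\cD = \Sp_{\geqslant 0}$ using the closure of Grothendieck prestable $\infty$-categories under tensor product (\cite[Theorem C.4.2.1]{Lurie_SAG}), then handle that case by combining \cref{cor:prestability_sheaves_of_spectra}, \cref{eg:sheaves_of_spectra}, and the characterization of Grothendieck prestable $\infty$-categories as connective parts of $t$-structures compatible with filtered colimits (\cite[Proposition C.1.2.9]{Lurie_SAG}, which is the precise reference for what you attribute to \S C.2). Your extra detail via \cref{lem:tensor_product_connectives_extensions}-(3) and the fallback argument for closure under tensor products are sound but not needed.
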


\begin{proof}
	Combining \cref{recollection:reduction_to_spectra} and \cite[Theorem C.4.2.1]{Lurie_SAG}, it is enough to deal with the case where $\cD = \Sp_{\geqslant 0}$, and this case immediately follows from \cref{cor:prestability_sheaves_of_spectra}, \cref{eg:sheaves_of_spectra}, and \cite[Proposition C.1.2.9]{Lurie_SAG}.
\end{proof}

\begin{prop}\label{cor:standard_t_structure_topos}
	Let $\cX$ be an $\infty$-topos.
	Let $\cE$ be a presentable stable $\infty$-category equipped with an accessible $t$-structure $ \tau = (\cE_{\geqslant 0}, \cE_{\leqslant 0})$ which is compatible with filtered colimits and right complete.
	Then for every integer $n \in \ZZ$, the natural functor
	\begin{equation*} 
		\cX \otimes \cE_{\geqslant n} \to (\cX \otimes \cE)_{\geqslant n} 
	\end{equation*}
	is an equivalence.
	In particular, the standard $t$-structure on $\cX \otimes \cE$ is compatible with filtered colimits and right complete.
\end{prop}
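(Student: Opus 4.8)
The plan is to identify $\cX \otimes \cE$, together with its standard $t$-structure, with the stabilization of the Grothendieck prestable $\infty$-category $\cX \otimes \cE_{\geqslant 0}$ equipped with its canonical $t$-structure, and then read off all the assertions from the general theory of Grothendieck prestable $\infty$-categories. First I would reduce to $n = 0$: the shifted $t$-structure $(\cE_{\geqslant n}, \cE_{\leqslant n-1})$ is again accessible, compatible with filtered colimits, and right complete, the standard $t$-structure it induces on $\cX \otimes \cE$ has connective part $(\cX \otimes \cE)_{\geqslant n}$ by definition, and $\cX \otimes \cE_{\geqslant n} \simeq \Sigma^n(\cX \otimes \cE_{\geqslant 0})$ with the comparison functor of the statement equal to $\Sigma^n$ of the one for $n = 0$; so it suffices to treat $n = 0$.

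Since $\tau$ is accessible and compatible with filtered colimits, $\cE_{\geqslant 0}$ is a Grothendieck prestable $\infty$-category, and since $\tau$ is moreover right complete, the inclusion $i_{\geqslant 0} \colon \cE_{\geqslant 0} \hookrightarrow \cE$ exhibits $\cE$ as the stabilization $\Stab(\cE_{\geqslant 0})$, with $\tau$ corresponding to the canonical $t$-structure on the stabilization (see \cite[Appendix~C, and in particular Proposition~C.1.2.9]{Lurie_SAG}). By \cref{cor:prestability}, $\cG \coloneqq \cX \otimes \cE_{\geqslant 0}$ is again Grothendieck prestable. Using that $\Stab(-) \simeq \Sp \otimes (-)$ on presentable $\infty$-categories (\cite[Example 4.8.1.23]{Lurie_Higher_algebra}) together with the associativity and symmetry of $\otimes$ in $\PrL$, I obtain
\begin{equation*}
	\Stab(\cG) \simeq \Sp \otimes (\cX \otimes \cE_{\geqslant 0}) \simeq \cX \otimes (\Sp \otimes \cE_{\geqslant 0}) \simeq \cX \otimes \Stab(\cE_{\geqslant 0}) \simeq \cX \otimes \cE \comma
\end{equation*}
and by functoriality of $\otimes$ the canonical functor $\cG \to \Stab(\cG)$ is carried, under this chain of equivalences, to $\id_\cX \otimes i_{\geqslant 0} = i_{\geqslant 0}^\cX$. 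Now the canonical $t$-structure on the stabilization of a Grothendieck prestable $\infty$-category is right complete and compatible with filtered colimits, its connective part is the essential image of the (fully faithful) comparison functor from the Grothendieck prestable $\infty$-category, and that essential image is closed under colimits and extensions (again \cite[Appendix~C]{Lurie_SAG}). Transporting along the displayed chain of equivalences, $\cX \otimes \cE$ therefore carries a right-complete, filtered-colimit-compatible $t$-structure whose connective part is the essential image of $i_{\geqslant 0}^\cX$; in particular $i_{\geqslant 0}^\cX$ is fully faithful with essential image closed under colimits and extensions.

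It remains to compare this with the standard $t$-structure. By \cref{cor:standard_t_structure_fundamentals}-(2), $(\cX \otimes \cE)_{\geqslant 0}$ is generated under colimits and extensions by the essential image of $i_{\geqslant 0}^\cX$; since that essential image is itself closed under colimits and extensions, $(\cX \otimes \cE)_{\geqslant 0}$ coincides with it. As a $t$-structure is determined by its connective part (\cite[Proposition 1.4.4.11]{Lurie_Higher_algebra}), the standard $t$-structure agrees with the one transported from $\Stab(\cG)$; hence $i_{\geqslant 0}^\cX \colon \cX \otimes \cE_{\geqslant 0} \to (\cX \otimes \cE)_{\geqslant 0}$ is an equivalence and the standard $t$-structure is compatible with filtered colimits and right complete. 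Undoing the reduction to $n = 0$ yields the statement for every $n \in \ZZ$. Alternatively, once $\cX \otimes \cE_{\geqslant 0}$ is known to be prestable and $i_{\geqslant 0}^\cX$ fully faithful, the equivalence part follows directly from the implication (3)$ \Rightarrow $(2) of \cref{lem:tensor_product_connectives_extensions}.

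The one delicate point is the identification $\Stab(\cX \otimes \cE_{\geqslant 0}) \simeq \cX \otimes \cE$ carrying $\cG \to \Stab(\cG)$ to $i_{\geqslant 0}^\cX$ — equivalently, the full faithfulness of $i_{\geqslant 0}^\cX$ — which is exactly where the $\infty$-topos hypothesis on $\cX$ enters, through \cref{cor:prestability} and ultimately \cref{cor:prestability_sheaves_of_spectra}; for a general presentable $\cX$, tensoring the inclusion $\cE_{\geqslant 0} \hookrightarrow \cE$ with $\cX$ need not remain fully faithful, which is precisely why $(\cX \otimes \cE)_{\geqslant 0}$ has to be defined in general as the closure under colimits and extensions of the essential image of $i_{\geqslant 0}^\cX$.
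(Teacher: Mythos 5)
Your proposal is correct and follows essentially the same route as the paper: reduce to $n=0$, invoke \cref{cor:prestability} to see that $\cX \otimes \cE_{\geqslant 0}$ is Grothendieck prestable, use right completeness of $\tau$ to identify $\cX \otimes \cE$ with $\Stab(\cX \otimes \cE_{\geqslant 0})$, and conclude via \cref{lem:tensor_product_connectives_extensions} together with the general theory of \cite[Appendix C]{Lurie_SAG}. The only cosmetic difference is that you read off right completeness of the standard $t$-structure directly from the theory of stabilizations of Grothendieck prestable $\infty$-categories, whereas the paper verifies it by the explicit colimit computation using that $\cX \otimes (-)$ preserves colimits in $\PrL$.
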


\begin{proof}
	It is enough to treat the case $n = 0$.
	We know from \cref{cor:prestability} that $\cX \otimes \cE_{\geqslant 0}$ is a Grothendieck prestable $\infty$-category.
	In particular, \cite[Remark C.1.1.6 \& Proposition C.1.2.9]{Lurie_SAG} imply that the natural functor
	\begin{equation*} 
		\cX \otimes \cE_{\geqslant 0} \to \Stab( \cX \otimes \cE_{\geqslant 0} ) \simeq \cX \otimes \cE_{\geqslant 0} \otimes \Sp \simeq \cX \otimes \Stab(\cE_{\geqslant 0}) 
	\end{equation*}
	is fully faithful.
	On the other hand, since the $t$-structure $\tau$ is right complete, \cite[Remark C.3.1.5]{Lurie_SAG} provides a canonical equivalence $\Stab(\cE_{\geqslant 0}) \simeq \cE$.
	The first claim then follows from \cref{lem:tensor_product_connectives_extensions}.
	Finally, \cite[Proposition C.1.4.1]{Lurie_SAG} guarantees that the unique $t$-structure on $\cX \otimes \cE \simeq \Stab(\cX \otimes \cE_{\geqslant 0})$ whose connective part is given by $\cX \otimes \cE_{\geqslant 0}$ is compatible with filtered colimits.
	
	We are left to prove that the standard $t$-structure is right complete.
	For this, we have to check that the canonical functor
	\begin{equation*} 
		\colim \big( \cdots \to (\cX \otimes \cE)_{\geqslant n} \to (\cX \otimes \cE)_{\geqslant n-1} \to \cdots \ \big) \to \cX \otimes \cE 
	\end{equation*}
	is an equivalence, where the colimit is computed in $\PrL$.
	Using the equivalences
	\begin{equation*}
		(\cX \otimes \cE)_{\geqslant n} \simeq \cX \otimes \cE_{\geqslant n} \comma
	\end{equation*}
	the conclusion follows immediately from the fact that the $t$-structure $\tau$ is right complete and the fact that $\cX \otimes (-) $ commutes with colimits in $\PrL$.
\end{proof}

\begin{rem}
	In particular, \Cref{cor:standard_t_structure_topos} establishes the full faithfulness of the natural functor
	\begin{equation*} 
		\cX \otimes \cE_{\geqslant 0} \to \cX \otimes \cE \period 
	\end{equation*}
	Assume that $\cX = \Sh(\cC,\tau)$ is the $\infty$-topos of sheaves on some $\infty$-site $(\cC,\tau)$.
	Then
	\begin{equation*} 
		\cX \otimes \cE_{\geqslant 0} \simeq \Sh(\cC,\tau;\cE_{\geqslant 0}) \qquad \text{and} \qquad \cX \otimes \cE \simeq \Sh(\cC,\tau;\cE) \period 
	\end{equation*}
	Notice that the natural functor
	\begin{equation*} 
		\Sh(\cC,\tau;\cE_{\geqslant 0}) \to \Sh(\cC,\tau;\cE) 
	\end{equation*}
	induced by the functoriality of the tensor product in $\PrL$ implicitly involves sheafification.
	Indeed, if $F$ is a sheaf with values in $\cE_{\geqslant 0}$, we can view $ F $ as a presheaf with values in $\cE$, but this presheaf is typically not a sheaf (as the constant sheaf on $ \mathrm{S}^1$ with coefficients in a commutative ring $R$ shows).
	Instead, the above comparison functor further sheafifies the resulting presheaf.
	As a result, even for sheaf $ \infty $-topoi, it is not obvious that this functor is fully faithful.
\end{rem}

\begin{cor}\label{cor:geometric_morphism_t_exact}
	Let $f^{*} \colon \cX \to \cY$ be a left exact left adjoint between $\infty$-topoi.
	Let $\cE$ be a presentable stable $\infty$-category equipped with an accessible $t$-structure $\tau = (\cE_{\geqslant 0}, \cE_{\leqslant 0})$ which is compatible with filtered colimits and right complete.
	Then the induced functor
	\begin{equation*} 
		f^{*} \otimes \id_\cE \colon \cX \otimes \cE \to \cY \otimes \cE 
	\end{equation*}
	is $t$-exact.
\end{cor}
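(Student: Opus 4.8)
The plan is to deduce the statement from \Cref{cor:t_structure_right_exact} together with the prestable reformulation of the standard $t$-structure provided by \Cref{cor:standard_t_structure_topos}, reducing the only nontrivial point to the case $\cE=\Sp$ that is already handled. First, \Cref{cor:t_structure_right_exact} applies to $f^{*}$ (a fortiori a morphism of $\PrL$) and shows that $f^{*}_{\cE}\coloneqq f^{*}\otimes\id_{\cE}$ is right $t$-exact; so it remains to prove that $f^{*}_{\cE}$ carries $(\cX\otimes\cE)_{\leqslant 0}$ into $(\cY\otimes\cE)_{\leqslant 0}$. I stress that this is \emph{not} formal: the inclusion $\cE_{\leqslant 0}\hookrightarrow\cE$ is not colimit-preserving, so functoriality of $\otimes$ does not apply to it, and the topos hypothesis on $\cX,\cY$ together with the completeness hypotheses on $\tau$ are genuinely needed.

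Next, since $\tau$ is compatible with filtered colimits and right complete and $\cX,\cY$ are $\infty$-topoi, \Cref{cor:standard_t_structure_topos} identifies $(\cX\otimes\cE)_{\geqslant 0}$ with $\cX\otimes\cE_{\geqslant 0}$, which is Grothendieck prestable by \Cref{cor:prestability}, and exhibits $\cX\otimes\cE$ with its standard $t$-structure as $\Stab(\cX\otimes\cE_{\geqslant 0})$; likewise over $\cY$. Because $i_{\geqslant 0}\colon\cE_{\geqslant 0}\hookrightarrow\cE$ \emph{is} colimit-preserving, functoriality of $\otimes$ yields a commuting square identifying the restriction of $f^{*}_{\cE}$ to connective parts with
\begin{equation*}
	g\coloneqq f^{*}\otimes\id_{\cE_{\geqslant 0}}\colon \cX\otimes\cE_{\geqslant 0}\longrightarrow\cY\otimes\cE_{\geqslant 0}\comma
\end{equation*}
and the universal property of stabilization of Grothendieck prestable $\infty$-categories then gives $f^{*}_{\cE}\simeq\Stab(g)$. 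Under the correspondence of \cite[Appendix C]{Lurie_SAG} between Grothendieck prestable $\infty$-categories and stable presentable $\infty$-categories equipped with a right-complete $t$-structure with Grothendieck prestable connective part, the functor $f^{*}_{\cE}=\Stab(g)$ is $t$-exact if and only if $g$ is left exact; in particular it suffices to prove that $g$ preserves finite limits.

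Finally, to see that $g$ is left exact I would reduce to $\cE=\Sp$: by \Cref{recollection:reduction_to_spectra} one has $\cX\otimes\cE_{\geqslant 0}\simeq(\cX\otimes\Sp_{\geqslant 0})\otimes\cE_{\geqslant 0}$, and likewise over $\cY$, under which $g$ becomes $(f^{*}\otimes\id_{\Sp_{\geqslant 0}})\otimes\id_{\cE_{\geqslant 0}}$. Combining \Cref{eg:sheaves_of_spectra}, \Cref{cor:prestability_sheaves_of_spectra} and \Cref{cor:standard_t_structure_topos} for $\cE=\Sp$ identifies $\cX\otimes\Sp_{\geqslant 0}$ with $\Sh(\cX;\Sp)_{\geqslant 0}$, and the second part of \Cref{cor:t_structure_right_exact} says that $f^{*}\otimes\id_{\Sp}$ is $t$-exact, so its restriction $f^{*}\otimes\id_{\Sp_{\geqslant 0}}$ to connective parts is left exact; tensoring a left-exact colimit-preserving functor of Grothendieck prestable $\infty$-categories with $\id_{\cE_{\geqslant 0}}$ again produces a left-exact functor. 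The main obstacle in all of this is purely a matter of citing the right statements from \cite[Appendix C]{Lurie_SAG} — that $\Stab$ takes left-exact colimit-preserving functors of Grothendieck prestable $\infty$-categories to $t$-exact functors, and that such functors are closed under tensoring with a fixed Grothendieck prestable $\infty$-category; everything else is bookkeeping with the functoriality of the tensor product and the identification $f^{*}_{\cE}\simeq\Stab(g)$.
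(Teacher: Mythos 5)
Your proposal is correct and follows essentially the same route as the paper: right $t$-exactness via \Cref{cor:t_structure_right_exact}, then left $t$-exactness by passing to $\Stab$ of the Grothendieck prestable connective parts (\cite[Proposition C.3.2.1]{Lurie_SAG}), reducing to $\cE = \Sp$ via \Cref{recollection:reduction_to_spectra} and \cite[Proposition C.4.4.1]{Lurie_SAG}, and concluding with the second half of \Cref{cor:t_structure_right_exact}. The two citations you flag as the "main obstacle" are exactly the ones the paper uses.
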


\begin{proof}
	We already know from \cref{cor:t_structure_right_exact} that $f^{*} \otimes \id_\cE$ is right $t$-exact.
	To prove left $t$-exactness, we first recall that \cref{cor:standard_t_structure_topos} shows that the $t$-structures on both $\cX \otimes \cE$ and $\cY \otimes \cE$ are right complete.
	Therefore, $\cX \otimes \cE \simeq \Stab(\cX \otimes \cE_{\geqslant 0})$, and similarly for $\cY \otimes \cE$.
	Invoking \cite[Proposition C.3.2.1]{Lurie_SAG}, we see that $f^{*} \otimes \id_\cE$ is left $t$-exact if and only if the induced functor
	\begin{equation*}
		(f^{*} \otimes \id_{\cE_{\geqslant 0}}) \colon \cX \otimes \cE_{\geqslant 0} \to \cY \otimes \cE_{\geqslant 0} 
	\end{equation*}
	is left exact.	
	Combining \cref{recollection:reduction_to_spectra}, \cref{cor:prestability} and \cite[Proposition C.4.4.1]{Lurie_SAG}, we reduce ourselves to the case where $\cE = \Sp$.
	In this case, the conclusion follows from the second half of \cref{cor:t_structure_right_exact}.
\end{proof}

\bibliographystyle{amsalpha}
\bibliography{dahema}

\end{document}